\theoremstyle{oupplain}
\newtheorem{theorem}{Theorem}[section]
\newtheorem{lemma}[theorem]{Lemma}
\newtheorem{proposition}[theorem]{Proposition}
\theoremstyle{oupdefinition}
\newtheorem{definition}{Definition}[section]
\theoremstyle{oupremark}
\newtheorem{remark}[theorem]{Remark}
\theoremstyle{oupproof}
\numberwithin{equation}{section}
\let\mathcal\mathscr
\NewDocumentCommand{\definealphabet}{mmmm}
{
	\int_step_inline:nnn { `#3 } { `#4 }
	{
		\cs_new_protected:cpx { #1 \char_generate:nn { ##1 }{ 11 } }
		{
			\exp_not:N #2 { \char_generate:nn { ##1 } { 11 } }
		}
	}
}
\begin{document}
	
\title{Compatibility of theta lifts and tempered condition}

	\author{Zhe Li}
	\address{School of mathematical Sciences, Fudan University, 220 Handan Rd., Yangpu District,200433,Shanghai, China}
	\curraddr{Department of Mathematics and Statistics, Case Western Reserve University, Cleveland, Ohio 43403}
	\email{zli17@fudan.edu.cn}
	
	\author{Shanwen WANG}
	\address{School of Mathematics, Renmin University of China, No. 59 Zhongguancun Street, Haidian District, 100872, Beijing, China}
	\email{s\_wang@ruc.edu.cn}
	\thanks{Shanwen Wang is supported by the Fundamental Research Funds for the Central Universities, the Research Funds of Renmin University of China No.2020030251 and The National Natural Science Foundation of China (Grant No.2020010209).}

	
	
	\keywords{ Tempered representations, Theta correspondence, Metaplectic groups, Weil representations}
	
	\begin{abstract}In this note, we show that the metaplectic theta correspondence is compatible with the tempered condition by directly estimating the matrix coefficients, without using the classification theorem.
		
	\end{abstract}
	
	\maketitle

\section{Introduction}
Throughout this article, we fix an additive character $\psi$ of $\mathbb{R}$. Let $dx$ be the unique Haar measure on $\mathbb{R}$ which is selfdual for Fourier transformation with respect to $\psi$.  Unless we explicitly mention the contrary, by a representation, we always mean a unitary Casselman-Wallach representation of finite length (Fr\'echet representation of moderate growth), cf. \cite[Chapter XII]{Wal92}. The inner product on a representation is denoted by $( -,-)$. Let $\pi$ be a representation. We denote by $\pi^{\vee}$ the space of continuous linear functionals on $\pi$ and it is given the strong topology (uniform convergence on bounded subsets). The smooth dual of $\pi$, i.e. the subspace of smooth vectors in $\pi^{\vee}$, is identified with $\overline{\pi}$.

Let ${\bf H}$ be a real reductive group $G$ or its double cover $\widehat{G}$. Among the irreducible  (genuine) representations of ${\bf H}$, there is an important class of representations, whose matrix coefficients are controlled by the Harish-Chandra $\Xi$ function. Such a representation is called tempered (genuine) representation. The classification of tempered representations is given in \cite{KZ82}, and in particular, one knows that an irreducible representation is tempered if and only if it is an irreducible parabolic induction of limit of discrete series (cf. \cite[theorem 14.2]{KZ82}). We will denote by ${\rm Temp}^{\rm irr}_{\psi}({\bf H})$ the set of irreducible tempered (genuine) representations of ${\bf H}$. 

Let $(G, G')$ be a reductive dual pair in $\mathrm{Sp}_{2m}(\mathbb{R})$. Let $\widehat{G}$ and $\widehat{G'}$ be the inverse images of $G$ and $G'$ in the metaplectic double covering group $\widehat{\rm Sp}_{2m}(\mathbb{R})$ by the covering map. For irreducible admissible representations $\pi$ and $\pi^{'}$ of $\widehat{G}$ and $\widehat{G'}$ respectively, we say $\pi$ and $\pi'$ correspond if $\pi\otimes \pi'$ is a quotient of the Weil representation $\omega$ of $\widehat{\mathrm{Sp}}_{2m}(\mathbb{R})$, restricted to $\widehat{G}\times\widehat{G'}$. Note that the Weil representation is not a representation by our convention as it is not of finite length.   

In this article, we will focus on the type ${\rm I}$ dual pairs over the field $(K,{\sharp})$ of equal rank, where $(K,{\sharp})$ is the field $\mathbb{R}$ with trivial involution $\sharp$ or the quaternion algebra ${\bbH}$ with quaternionic conjugation $\sharp$.
Let $(W, V)$ be the underlying quadratic space over $(K, \sharp)$ of our dual pair of equal rank $n$. Then,
\begin{enumerate}
	\item[(A)] If $K=\mathbb{R}$, then $W$ is a $2n$-dimensional real symplectic vector space and $V$ is a $2n+1$ real orthogonal spaces. Let ${\rm Rep}^{\rm gen}_{\psi}(\widehat{\rm Sp}(W))$ be the set of irreducible genuine representations $\pi_W$ of $\widehat{{\rm Sp}}(W)$. Let $S_{2n+1}$ be the set of isomorphism classes of real orthogonal spaces $V'$ with $\dim V'=2n+1$ and ${\rm disc}(V')\equiv 1 \in \mathbb{R}^{\times}/\mathbb{R}^{\times 2}$. Let ${\rm Rep}_{\psi}^{\rm irr}({\rm SO}(V'))$ be the set of irreducible representations of ${\rm SO}(V')$ with $V'\in S_{2n+1}$.  
	Adam and Barbasch \cite{AB98} show that the dual pair $({\rm Sp}(W)), {\rm O}(p,q))$ with $p+q=2n+1$ gives rise to a bijection between the genuine representations of metaplectic group and the representations of odd special orthogonal group of the same rank. 
	\begin{theorem}There is a bijection given by the metaplectic theta correspondence:
		\[{\rm Rep}_{\psi}^{\rm gen}(\widehat{\rm Sp}(W))\leftrightarrow \coprod_{V'\in S_{2n+1}}{\rm Rep}_{\psi}^{\rm irr}({\rm SO}(V')).\]
	\end{theorem}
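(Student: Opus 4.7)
The statement is the main theorem of Adams--Barbasch \cite{AB98}, so the plan is to follow their strategy. The first step is to invoke the archimedean Howe duality theorem for each individual dual pair $(\mathrm{Sp}(W), \mathrm{O}(p,q))$ with $p+q = 2n+1$: for each such pair, the maximal $\pi_W$-isotypic quotient of $\omega$ has the form $\pi_W \otimes \theta(\pi_W)$ for a unique irreducible $\theta(\pi_W)$ of $\widehat{\mathrm{O}(p,q)}$ (when $\pi_W$ occurs at all), and symmetrically from the other side. This already yields a bijection between those genuine representations of $\widehat{\mathrm{Sp}}(W)$ and $\widehat{\mathrm{O}(p,q)}$ that occur as quotients of $\omega$ restricted to the given pair.

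Next I would descend from $\widehat{\mathrm{O}(p,q)}$ to $\mathrm{SO}(p,q)$. Since $2n+1$ is odd, the metaplectic cover of $\mathrm{O}(p,q)$ splits and $\mathrm{O}(p,q) \cong \mathrm{SO}(p,q) \times \{\pm I\}$, so each irreducible genuine representation of $\widehat{\mathrm{O}(p,q)}$ is determined by a pair $(\sigma, \varepsilon)$ with $\sigma$ irreducible on $\mathrm{SO}(p,q)$ and $\varepsilon \in \{\pm 1\}$. The sign $\varepsilon$ is pinned down by the central character of the Weil representation, leaving only $\sigma$ as free data. The restriction $\mathrm{disc}(V')\equiv 1$ cutting out $S_{2n+1}$ should drop out of the discriminant computation for those signatures $(p,q)$ admitting a genuine lift to $\widehat{\mathrm{Sp}}(W)$ of the prescribed rank.

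The two deepest ingredients are then: \textbf{(i)} (Exhaustion) every irreducible genuine $\pi_W$ of $\widehat{\mathrm{Sp}}(W)$ occurs as a theta lift from some $\mathrm{SO}(V')$ with $V' \in S_{2n+1}$, i.e.\ the first occurrence in the orthogonal tower happens no later than the equal-rank threshold; and \textbf{(ii)} (Disjointness) no $\pi_W$ occurs for two non-isomorphic $V', V'' \in S_{2n+1}$, a form of theta dichotomy at equal rank. For (i) I would run $\pi_W$ up the orthogonal tower and use Kudla's persistence principle, combined with an analysis of the degenerate principal series of the larger orthogonal group (in the Kudla--Rallis style) to force non-vanishing at or before the equal-rank step. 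For (ii) I would compare infinitesimal characters, central characters, and the low $K$-types on both sides, or use a conservation-type relation between the two Witt towers with $\mathrm{disc} = 1$.

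The main obstacle I anticipate is the exhaustion step \textbf{(i)}: it demands a \emph{uniform} upper bound on the first occurrence index across all irreducible genuine $\pi_W$, and a naive estimate on matrix coefficients of $\omega$ is insufficient. The standard route uses either a degenerate principal series argument together with the doubling method, or input from associated varieties/wavefront sets to constrain where $\pi_W$ can first appear; both require non-trivial machinery beyond the bare Howe duality. Once (i) and (ii) are in hand, assembling the global bijection in the theorem is formal.
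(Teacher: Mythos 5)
This theorem is not proved in the paper at all: it is quoted as background, with the proof attributed entirely to Adams--Barbasch \cite{AB98}, and the paper's own contribution starts only afterwards (the compatibility with temperedness, proved by matrix coefficient estimates). So there is no internal argument to compare yours against; the only fair comparison is with the strategy of \cite{AB98} itself. Measured against that, your proposal is an outline rather than a proof. The formal parts (Howe duality for each pair $(\mathrm{Sp}(W),\mathrm{O}(p,q))$, the descent from $\mathrm{O}(p,q)$ to $\mathrm{SO}(p,q)$ using that the cover splits over the orthogonal member and that $\mathrm{O}(p,q)\cong \mathrm{SO}(p,q)\times\{\pm I\}$ in odd dimension) are fine, though note the splitting over $\mathrm{O}(V)$ is governed by the parity of $\dim W$, not by $2n+1$ being odd. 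The substance of the theorem is exactly your steps \textbf{(i)} (exhaustion) and \textbf{(ii)} (disjointness), and these you do not prove; you name possible toolkits (Kudla persistence, Kudla--Rallis degenerate principal series, conservation-type relations, wavefront sets) without carrying any of them out, and some of these are genuinely delicate to implement at the archimedean place.

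It is also worth saying that this is not the route Adams--Barbasch take. Their proof is constructive: they determine the correspondence explicitly (lowest $K$-types, induction on rank, and an explicit description of the lifts in terms of Langlands-type parameters), and the bijection over the disjoint union of the forms $V'\in S_{2n+1}$ \emph{falls out} of that explicit determination, rather than being assembled from an abstract exhaustion-plus-dichotomy argument. Your proposed alternative (tower-theoretic first occurrence plus dichotomy) is closer in spirit to the $p$-adic arguments of Gan--Ichino and to later treatments of theta dichotomy; it could plausibly be made to work over $\mathbb{R}$, but as written the two load-bearing steps are placeholders, so the proposal has a genuine gap exactly where the theorem is hard.
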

	\item[(B)] If $K$ is the quaternion algebra $\bbH$, then $W$ is a skew-Hermitian space over $\bbH$ of rank $n$ or $n-1$ and $V$ is a Hermitian space over $\bbH$ of rank $n$. Let $S^{'}_{n}$ be the set of isomorphism classes of Hermitian spaces $W$ of rank $n$. Let ${\rm Rep}^{\mathrm{irr}}_{\psi}(\rm Sp(W))$ be the set of irreducible representations $\pi_W$ of ${\rm Sp}(W)$, for $W\in S^{'}_n$. Let ${\rm O}(V)$ be the isometry group of hermitian space $V=\mathbb{H}^n$ over $\bbH$ of rank $n$ and  let ${\rm Rep}^{\rm irr}_{\psi}({\rm O}(V))$ be the set of irreducible representations of ${\rm O}(V)$.
	In \cite{LPTZ03}, the authors show the following theorem.
	\begin{theorem}[\cite{LPTZ03} Theorem 5.8]
		There is a bijection given by the theta correspondence: 
		\[\coprod_{W\in S^{'}_{n}\cup S^{'}_{n-1}}\mathrm{Rep}_{\psi}^{\mathrm{irr}}(\mathrm{Sp}(W))\leftrightarrow \mathrm{Rep}_{\psi}^{\mathrm{irr}}({\rm O}(V)).\] 
	\end{theorem}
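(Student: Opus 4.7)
The plan is to establish the bijection by combining two ingredients: a local Howe duality statement for each fixed quaternionic dual pair $(\mathrm{Sp}(W), \mathrm{O}(V))$, and a conservation/dichotomy relation that forces the first occurrence of any $\pi' \in \mathrm{Rep}^{\mathrm{irr}}_{\psi}(\mathrm{O}(V))$ to land in exactly one of the two Witt towers, indexed by $S'_n$ and $S'_{n-1}$. Throughout, I would view $V$ as fixed (of rank $n$) and let $W$ vary over the two families of skew-Hermitian $\bbH$-spaces of rank $n$ and $n-1$, so that the right-hand side sees only one group while the left-hand side varies over different real forms and the two adjacent ranks.

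First, for each fixed $W \in S'_n \cup S'_{n-1}$ and each irreducible genuine $\pi \in \mathrm{Rep}^{\mathrm{irr}}_{\psi}(\mathrm{Sp}(W))$, I would form the maximal $\pi$-isotypic quotient of the Weil representation $\omega|_{\mathrm{Sp}(W) \times \mathrm{O}(V)}$, which by the standard argument factors as $\pi \boxtimes \Theta(\pi)$ for some smooth representation $\Theta(\pi)$ of $\mathrm{O}(V)$. The Howe duality step is to show $\Theta(\pi)$ is either zero or admits a unique irreducible quotient $\theta(\pi)$; this is done via the doubling method or via an explicit analysis of $K$-types using the joint harmonics decomposition of the oscillator representation, and the quaternionic case can be handled by translating from the complex or real split cases where it is classical. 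Symmetrically, starting from an irreducible $\pi'$ of $\mathrm{O}(V)$ one obtains a candidate $\theta(\pi')$ on $\mathrm{Sp}(W)$ for each choice of $W$. This gives a well-defined map in each direction, provided the lift is non-zero.

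Next, I would establish the \emph{conservation relation} for the pair $(\mathrm{Sp}(W), \mathrm{O}(V))$ in the quaternionic case: for each $\pi'$, summing the dimension indices of the first non-vanishing occurrences over the two Witt towers of skew-Hermitian spaces should equal the total rank forced by the equal-rank hypothesis, so that $\pi'$ occurs at exactly one of rank $n$ or rank $n-1$ and in exactly one real form within that rank. This is what ensures that the disjoint union $\coprod_{W \in S'_n \cup S'_{n-1}} \mathrm{Rep}^{\mathrm{irr}}_{\psi}(\mathrm{Sp}(W))$ maps bijectively onto $\mathrm{Rep}^{\mathrm{irr}}_{\psi}(\mathrm{O}(V))$ rather than with multiplicity. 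Injectivity then follows from Howe duality together with the fact that different $W$'s give different lifts, and surjectivity follows because every $\pi'$ must have some first occurrence in the combined tower.

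The main obstacle is the Howe duality statement itself in the quaternionic setting, where one cannot directly quote the classical arguments of Howe for $(\mathrm{O}, \mathrm{Sp})$ over $\mathbb{R}$. One would need to control the $\widehat{\mathrm{Sp}}(W) \times \mathrm{O}(V)$-decomposition of the Fock model of $\omega$, verify that the space of joint harmonics is irreducible as a module over the product of the maximal compact subgroups up to the appropriate twist, and extend multiplicity-one from the compact picture to the full $(\mathfrak{g}, K)$-module picture via translation principles. The conservation relation is the second delicate point: its proof typically rests on computing the leading term of a see-saw integral or on an explicit Jacquet-module-style analysis, and transporting that analysis to the quaternionic groups is where most of the technical work would lie.
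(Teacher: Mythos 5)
This statement is not proved in the paper at all: it is quoted verbatim from \cite{LPTZ03} (Theorem 5.8), where the bijection is obtained by \emph{explicitly determining} the duality correspondence for the pair $(\mathrm{Sp}(p,q),O^{*}(2n))$ — induction principle plus explicit computation of the lifts in terms of lowest $K$-types/Langlands data — rather than by the abstract Howe-duality-plus-conservation scheme you propose. So your attempt should be judged as a free-standing sketch, and as such it has two concrete gaps. First, the bookkeeping of Witt towers is wrong. The side that varies in the theorem is the quaternionic Hermitian side: the spaces $W$ of rank $n$ and $n-1$ comprise $2n+1$ isomorphism classes (signatures $(p,q)$ with $p+q=n$ or $n-1$), and these lie in $2n+1$ \emph{distinct} Witt towers, one for each signature difference $p-q$, because the anisotropic kernels are the definite spaces of every rank; the skew-Hermitian side, by contrast, has a single space in each rank, hence a single tower. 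A conservation relation pairing ``the two Witt towers'' therefore does not exist in the form you invoke, and even the correct archimedean conservation relations (Sun--Zhu) only constrain sums of first-occurrence indices; they do not by themselves yield the key statement that each $\pi'\in\mathrm{Rep}^{\mathrm{irr}}_{\psi}(\mathrm{O}(V))$ has a nonzero lift to \emph{exactly one} of the $2n+1$ groups $\mathrm{Sp}(p,q)$. Your sentence ``different $W$'s give different lifts'' is precisely this unproven dichotomy/disjointness across real forms, so the argument is circular at its crucial point; this exhaustion-and-uniqueness across signatures is exactly the content that \cite{LPTZ03} establishes by explicit computation.

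Second, the step you single out as the main obstacle — Howe duality for quaternionic dual pairs over $\mathbb{R}$ — is not an obstacle: Howe's archimedean duality theorem \cite{Howe89} covers all irreducible reductive dual pairs over $\mathbb{R}$, including $(\mathrm{Sp}(p,q),O^{*}(2n))$, so there is nothing to ``translate from the complex or real split cases,'' and the joint-harmonics analysis you plan to redo is already classical. In short, the uniqueness of the irreducible quotient $\theta(\pi)$ is available off the shelf, while the part your sketch treats as routine (which signature and which of the ranks $n$, $n-1$ actually receives a given representation, and why no representation occurs twice) is the genuinely hard part and is left without a viable argument.
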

	
\end{enumerate}

Moreover, in \cite{AB98} and  \cite{LPTZ03}, the authors explicitly determined the $K$-types of all the representations on both side of the theta correspondence. Together with the classification of the irreducible tempered representations, one can deduce that the theta correspondence is compatible with the tempered condition. 
\begin{theorem}\label{main} With the same notation as above. \begin{enumerate}
		\item If $K=\mathbb{R}$, then there is a bijection given by the theta correspondence:
		\[{\rm Temp}_{\psi}^{\rm gen}(\widehat{\rm Sp}(W))\leftrightarrow \coprod_{V' \in S_{2n+1}}{\rm Temp}_{\psi}^{\rm irr}({\rm SO}(V')).\]
		\item If $K=\bbH$, then there is a bijection given by the theta correspondence:
		\[\coprod_{W\in S^{'}_{n}\cup S^{'}_{n-1}}{\rm Temp}_{\psi}^{\rm irr}(\rm Sp(W))\leftrightarrow {\rm Temp}_{\psi}^{\rm irr}({\rm O}(V)).\]
	\end{enumerate}
\end{theorem}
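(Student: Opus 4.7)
Theorems 1.1 and 1.2 already provide the bijection on the level of all irreducible (genuine) representations, so Theorem \ref{main} reduces to showing that, for a corresponding pair $(\pi,\pi')$, the representation $\pi$ is tempered if and only if $\pi'$ is. Following the promise of the abstract, the plan is to proceed entirely through matrix coefficient estimates, without invoking the Knapp--Zuckerman classification. Let $\mathbf{H}$ and $\mathbf{H}'$ denote the two members of the dual pair.

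The first step is to derive an integral identity realizing a matrix coefficient of $\pi'$ as an integral over $\mathbf{H}$ of a matrix coefficient of $\pi$ paired with one of $\omega$. Starting from the non-zero $\mathbf{H}\times\mathbf{H}'$-equivariant pairing $\omega\otimes\overline{\pi\otimes\pi'}\to\mathbb{C}$ arising from the quotient map $\omega\twoheadrightarrow\pi\otimes\pi'$, one obtains a formula of the schematic shape
\[
\langle\pi'(h')v'_1,v'_2\rangle \;=\; \int_{\mathbf{H}}\langle\omega(h,h')\phi_1,\phi_2\rangle\,\overline{\langle\pi(h)v_1,v_2\rangle}\,dh
\]
for suitable test vectors, together with a symmetric identity with the roles of $\mathbf{H}$ and $\mathbf{H}'$ exchanged. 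The second step is to bound the joint Weil coefficient $\langle\omega(h,h')\phi_1,\phi_2\rangle$ by an explicit computation in the Schr\"odinger model; in the equal-rank type I setting, choosing $\phi_i$ of Gaussian type yields a bound that, along the Cartan $A\subset\mathbf{H}$, is small enough to absorb $\Xi_{\mathbf{H}}$ and the usual polynomial weight, while along $A'\subset\mathbf{H}'$ contributes exactly $\Xi_{\mathbf{H}'}$.

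In the third step, substituting the tempered estimate $|\langle\pi(h)v_1,v_2\rangle|\leq C\,\Xi_{\mathbf{H}}(h)(1+\|h\|)^N$ into the integral and using the Cartan decomposition on $\mathbf{H}$ together with Harish-Chandra's $L^{2+\epsilon}$-integrability of $\Xi_{\mathbf{H}}$ against polynomial weights, one concludes $|\langle\pi'(h')v'_1,v'_2\rangle|\leq C'\,\Xi_{\mathbf{H}'}(h')(1+\|h'\|)^{N'}$, so $\pi'$ is tempered; the converse direction follows from the symmetric identity. The main obstacle is the sharpness in the second step: the Weil coefficient bound must be precise enough that integration against $\Xi_{\mathbf{H}}$ reproduces exactly $\Xi_{\mathbf{H}'}$ on the other side, with no slack in the leading exponent. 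The equal-rank hypothesis enters here, matching the positive roots of $\mathbf{H}$ to those of $\mathbf{H}'$ via the mixed $KAK$-decomposition of the joint action. In case (A) one must additionally keep track of the metaplectic cover and the genuine condition, but the cover splits on the Cartan subgroups used in the estimate, so the Weil bound descends and the argument proceeds uniformly.
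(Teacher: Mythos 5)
Your overall skeleton is indeed the paper's: realize the matrix coefficients of the lift as the (absolutely convergent) integrals $\Phi_{\phi,\phi',v,v'}(h)=\int_{\mathrm{Sp}(W)}\overline{(\pi(g)v,v')}\,(\omega_{W,V,\psi}(g,h)\phi,\phi')\,dg$, estimate the Weil coefficient along the Cartan subgroups, insert the tempered bound for $\pi$, and integrate. But the step that constitutes the actual content of the proof is missing. The joint Weil coefficient does not decouple in the way your Step 2 suggests: in the mixed model it is bounded by $\prod_i|a_i|^{\dim V/2}\prod_j|b_j|^{\dim W/2-r_W}\prod_{i,j}\Upsilon(a_ib_j^{-1})$, and the factors $\Upsilon(a_ib_j^{-1})$ couple the two Cartan variables; asserting that the $a$-part is ``small enough to absorb $\Xi^{\mathrm{Sp}(W)}$'' while the $b$-part ``contributes exactly $\Xi^{\mathrm{O}(V)}$'' is precisely the statement that has to be proved, not an input. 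The paper's proof is exactly this verification: it inserts $\delta_{P,\mathrm{Sp}(W)}^{-1/2}(a)\sigma(a)^B$, raises to the power $2+\epsilon_0$, reduces to the integral (\ref{intli}) over $A_W^+\times A_V^+$, decomposes the domain into the interlacing regions $S_{p_1,\dots,p_{r_V+1}}$ of (\ref{order}) so that $\prod_{i,j}\Upsilon(a_ib_j^{-1})$ becomes monomial on each region, and checks that all partial sums of exponents are positive so that Lemma \ref{com} applies; this bookkeeping is where the equal-rank hypothesis actually enters. Note also that you aim at a pointwise bound $|\Phi(h')|\ll\Xi^{\mathrm{O}(V)}(h')(1+\sigma(h'))^{N'}$, which is sharper than what the paper establishes: the paper only proves $L^{2+\epsilon}$-membership of the lifted coefficients and invokes Cowling--Haagerup--Howe, which in addition requires the unitarity of $\theta_{W,V,\psi}(\pi)$ (obtained from He's semi-positivity of the form (\ref{form})) and the density statement of Proposition \ref{redense}; your plan addresses neither.

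Two further gaps. First, restricting to Gaussian test vectors is not harmless: temperedness must be certified on a family of vectors whose matrix coefficients are dense (this is the role of Proposition \ref{redense}, combined with the seminorm bound of Lemma \ref{cont}, valid for all Schwartz vectors in the spirit of Xue); pure Gaussians do not span a dense subspace of $\omega_{W,V,\psi}$, so you would need Gaussians times polynomials with uniform constants, i.e.\ essentially the seminorm estimate you are trying to bypass. (Also, the metaplectic cover does not split over the maximal split torus of $\mathrm{Sp}(W)$; this is harmless only because all estimates involve absolute values and one works with $\Xi^{\widehat{G}}=\Xi^{G}\circ p$ on $A_W^+$, as the paper does.) Second, the claim that the converse implication ``follows from the symmetric identity'' is unsubstantiated: exchanging the roles of the two groups changes the exponents ($\dim V=2n+1$ versus $\dim W=2n$, and the quaternionic analogues), so the convergence and positivity checks must be redone and do not follow formally from the first direction; the paper itself only carries out the estimate from $\mathrm{Sp}(W)$ to $\mathrm{O}(V)$ (its remark calls this the ``small group to big group'' direction), so to obtain the full bijection of tempered sets you must either perform the reverse estimate or state explicitly what replaces it.
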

The main purpose of this article is to prove the theorem \ref{main} by directly estimating the matrix coefficients, without using the classification theorem. The approach of estimating matrix coefficients is widely used. In fact, Gan and Ichino use the estimations of matrix coefficients to prove the convergence of the inner product of two matrix coefficients of representation obtained by the theta correspondence for the dual pair $({\rm O}_4, {\rm Sp}_4)$ over $p$-adic field (cf.\cite[Section 9, Lemma 9.1]{GI}). In \cite[Lemma D]{GI11}, they extend this to other dual pairs over $p$-adic field : $({\rm U}(n), {\rm U}(m))$, $(\mathrm{Sp}_{2n},{\rm O}_{2m+1})$, $(\mathrm{Sp}_{2n},{\rm O}_{2m})$, and prove that in their setting, the discrete series condition is compatible with the theta correspondence. In the unitary case over $\mathbb{R}$, a corresponding estimation is given by Xue (cf.\cite[Lemma 3.2]{Xue22}). In this note, we adapt their estimations to our case to prove the temperedness of the representations is compatible with theta correspondence. Note that the dual pairs in this note are not considered by them. 
\begin{remark}
	\begin{enumerate}
		\item  In \cite{He05}, H. He proved the theta correspondence is compatible with unitary condition in the semistable range. Our setting, the equal rank case, is contained in the semistable range. But since the category of tempered representations is a subcategory of the category of unitary representations, we need to refine the estimations of matrix coefficients given by He \cite[Theorem 6.2.1, 6.3.1, 6.4.1-3)]{He05} to achieve our goal.  More precisely, to show the theta correspondence is compatible with the tempered condtion, we will need the $L^{2+\epsilon}$-convergence of the matrix coefficients to prove our results.
		\item In \cite[Lemma D]{GI11}, Gan and Ichino only provide an estimation of the matrix coefficient from big group to small group over $p$-adic field. In this note, we only provide an estimation from small group to big group over field of real numbers. If we don't assume the symmetry of the theta correspondence, our estimation can be viewed as a complement for the estimation of Gan and Ichino. 
	\end{enumerate}
\end{remark}

\section{Tempered (genuine) representations}
Let ${\bf H}$ be a real reductive group $G$ or the double covering group $\widehat{G}$ of $G$. Let $A_G$ be the maximal $\mathbb{R}$-split torus of $G$ of rank $r$ (i.e. $A_G(\mathbb{R})\cong (\mathbb{R}^{\times})^r$), and $M$ be the centralizer of $A_G$ in $G$, which is exactly the Levi factor of a minimal parabolic subgroup $P$ of $G$. We will write an element $a\in A_G(\mathbb{R})$ by $(a_1,\cdots,a_r)$. 
We denote by $\Delta=R(A_G, P)$ the set of roots of $A_G$ in the unipotent radical $U$ of $P$. Set  
\begin{equation}
	\begin{split}
		A_G^+&=\{a\in A_G(\mathbb{R}): \vert\alpha(a)\vert\leq 1, \forall \alpha\in \Delta\}\\
		&=\{(a_1,\cdots,a_r):0<\vert a_1\vert\leq \vert a_2\vert\leq\cdots\leq\vert a_r\vert\leq 1 \} .
	\end{split}
\end{equation}

We denote by $\delta_{P, G}$ the modulus character of $P$. We fix a special maximal compact subgroup $K$ of $G(\mathbb{R})$ and we have a Cartan decomposition of $G(\mathbb{R})$: \[G(\mathbb{R})=KA_G^+K.\] 
For any integrable function $f$ on $G(\mathbb{R})$, the following formula holds (cf. \cite[\S 4]{II10}):
\begin{equation}\label{deco}\int_{G(\mathbb{R})} f(g)dg=\int_{A_G^+}\nu(a)\int_{K\times K}f(k_1ak_2)dk_1dk_2da,\end{equation} where $\nu$ is a positive function on $A_G^+$ such that $\nu(a)\leq C\cdot\delta^{-1}_{P, G}(a)$ for some constant $C$.

Harish-Chandra defined a special spherical function $\Xi^G$ on $G(\mathbb{R})$, which can be used to control the growth of $C^{\infty}$-functions on $G(\mathbb{R})$ with values in $\mathbb{C}$. We recall briefly its definition and some useful results.

We denote by $C^{\infty}(G(\mathbb{R}))$ the space of all complex-valued $C^{\infty}$-functions on $G(\mathbb{R})$.	Consider the normalized smooth induced representation
\[i_{P}^{G}(1)^{\infty}:=\{f\in C^{\infty}(G(\mathbb{R})): f(pg)=\delta_{P,G}(p)^{1/2}f(g), \forall p\in P(\mathbb{R}), g\in G(\mathbb{R})\}\] 
equipped with the scalar product
\[(f, f')=\int_Kf(k)\overline{f'(k)}dk, \forall f, f'\in i_{P}^{G}(1)^{\infty}.\]
Let $e_K\in i_{P}^{G}(1)^{\infty}$ be the unique function such that $e_K(k)=1$ for all $k\in K$. Then the Harish-Chandra spherical function $\Xi^{G}$ is defined by 
\[\Xi^{G}(g)=(i_{P} ^{G}(1)(g)e_K, e_K), \forall g\in G(\mathbb{R}).\]
Note that if $f$ and $g$ are positive functions on a set $X$, we will say $f$ is essential bounded by $g$, if there exists a $c>0$ such that $f(x)\leq c g(x)$ for all $x\in X$. We will denote it by $f\ll g$. We say $f$ and $g$ are equivalent if $f$ is essentially bounded by $g$ and $g$ is essentially bounded by $f$. The function $\Xi^{G}$ is a bi-$K$-invariant function and it is independent of the choice of the maximal compact subgroup $K$ up to equivalence.

Fix an embedding $\iota: G(\mathbb{R})\rightarrow {\rm GL}_{m}(\mathbb{R})$, we define the height function
\[\sigma(g)=1+\sup\{\log\vert a_{i,j}\vert, \log \vert b_{i,j}\vert\},\]
where $(a_{i,j})$ is the matrix $\iota(g)$ and $(b_{i,j})$ is the corresponding matrix of $\iota(g^{-1})$. In particular, if $a=(a_1,\cdots, a_r)\in A_G^+$, we have 
\begin{equation}\label{height}\sigma(a)=1-\log\vert a_1\vert\geq 1.\end{equation}

We have the following well-known estimation of $\Xi^G$ due to Harish-Chandra.
\begin{lemma}\cite[theorem 30]{Var77}\label{est}
	There exists constants $A, B>0$ such that for any $a\in A_{G}^+$, we have 
	\[A^{-1}\delta_{P,G}^{\frac{1}{2}}(a)\leq \Xi^{G}(a)\leq A\delta_{P,G}^{\frac{1}{2}}(a)\sigma (a)^B.\]
\end{lemma}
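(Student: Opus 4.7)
The plan is to prove the two-sided bound directly from the integral representation of $\Xi^G$ as a matrix coefficient of the spherical principal series. Using the Iwasawa decomposition $G(\mathbb{R}) = P(\mathbb{R}) K$, every $k \in K$ and $g \in G(\mathbb{R})$ admit a unique factorization $kg = p(k,g)\,\kappa(k,g)$ with $p(k,g) \in P(\mathbb{R})$ and $\kappa(k,g) \in K$. Unfolding the definition $\Xi^G(g) = (i_P^G(1)(g)\,e_K,\,e_K)$ via the transformation rule $e_K(pg) = \delta_{P,G}(p)^{1/2} e_K(g)$ together with $e_K|_K \equiv 1$ yields
\[
\Xi^G(g) = \int_K \delta_{P,G}(p(k,g))^{1/2}\, dk,
\]
from which both inequalities will be extracted.

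For the lower bound, observe that at $k = e$ and $g = a \in A_G^+$ one has $p(e,a) = a$, so the integrand equals $\delta_{P,G}(a)^{1/2}$. Because $a \in A_G^+$ satisfies $|\alpha(a)| \leq 1$ for every $\alpha \in \Delta$, the conjugation $k \mapsto a^{-1} k a$ is nonexpanding on a fixed small neighborhood $U \subset K$ of the identity. A continuity and compactness argument then gives
\[
\delta_{P,G}(p(k,a))^{1/2} \geq c\, \delta_{P,G}(a)^{1/2}
\]
uniformly for $k \in U$ and $a \in A_G^+$ with $c > 0$ independent of $a$, and integrating over $U$ produces the lower bound with $A^{-1} = c \cdot \mathrm{vol}(U)$.

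The upper bound is the substantive half, and the plan is to follow Harish-Chandra. The function $\Xi^G$ is a real-analytic bi-$K$-invariant joint eigenfunction of the algebra of bi-invariant differential operators on $G(\mathbb{R})$, hence its radial part on $A_G^+$ satisfies a holonomic system of ordinary differential equations with regular singularities along the chamber walls. A Frobenius-type analysis with leading exponent matching $\delta_{P,G}^{1/2}$ produces a convergent expansion
\[
\Xi^G(a) = \delta_{P,G}(a)^{1/2} \sum_{\mu} c_\mu\, e^{\mu(\log a)},
\]
where $\mu$ ranges over nonnegative integer combinations of simple roots. On $A_G^+$ one has $\alpha(\log a) \leq 0$ for every simple $\alpha$, so every $e^{\mu(\log a)} \leq 1$. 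Coincidences among characteristic exponents introduce logarithmic factors of degree bounded solely in terms of $G$, and since $\sigma(a)$ dominates every $|\log|a_i||$ up to a multiplicative constant on $A_G^+$, termwise control yields $\Xi^G(a) \ll \delta_{P,G}(a)^{1/2}\,\sigma(a)^B$.

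The main obstacle is this last step: making the Frobenius expansion converge uniformly on all of $A_G^+$, and in particular near the walls where characteristic exponents may collide and the radial system degenerates, while keeping control of the polynomial degree $B$. Carrying this out rigorously requires either Harish-Chandra's $c$-function machinery of Gindikin--Karpelevich type or an inductive descent through standard parabolic subgroups of lower rank, which is the content of the Varadarajan reference invoked in the statement.
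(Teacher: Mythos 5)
The paper gives no argument for this lemma at all --- it is quoted directly from \cite[Theorem 30]{Var77} --- so the only question is whether your sketch itself constitutes a proof, and as written it does not. The more serious local problem is the lower bound. The uniform claim $\delta_{P,G}(p(k,a))^{1/2}\geq c\,\delta_{P,G}(a)^{1/2}$ for $k\in U$ and \emph{all} $a\in A_G^+$ cannot follow from ``a continuity and compactness argument,'' because $A_G^+$ is not compact, and the statement that $k\mapsto a^{-1}ka$ is nonexpanding on a neighborhood of the identity in $K$ is false: $a^{-1}ka$ does not lie in $K$, and already in $\mathrm{SL}_2(\mathbb{R})$, with $k$ a small rotation and $a=\mathrm{diag}(t,t^{-1})$ deep in the chamber, its entries blow up. What conjugation by $a\in A_G^+$ actually contracts is the unipotent group $\overline{N}$ opposite to $U$, not a neighborhood in $K$; the standard repairs are either to transfer the $K$-integral to $\overline{N}$-coordinates (with the explicit Jacobian) and exploit that contraction, or to prove the pointwise inequality $\delta_{P,G}(p(k,a))^{1/2}\geq\delta_{P,G}(a)^{1/2}$ for \emph{every} $k\in K$ and $a\in A_G^+$ via Kostant's convexity theorem or highest-weight vectors of finite-dimensional representations, which yields the lower bound with $A^{-1}=1$ and no restriction to a neighborhood.

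For the upper bound you record the shape of Harish-Chandra's radial expansion, but the termwise bound you invoke needs summability of the coefficients $c_\mu$ uniformly up to the chamber walls, which is precisely what degenerates when characteristic exponents collide; you then concede that the uniform convergence, the coefficient control, and the origin of the exponent $B$ ``require Harish-Chandra's $c$-function machinery or an inductive descent, which is the content of the Varadarajan reference.'' That deferred step is the entire content of the estimate, so the proposal does not go beyond the citation the paper already makes: it is the standard outline of the classical proof, with the substantive half outsourced to \cite{Var77} and the elementary half argued incorrectly. Either carry out the $\overline{N}$-coordinate (or convexity) argument for the lower bound and cite \cite{Var77} for the upper bound honestly, or accept the lemma as a quoted classical result, as the paper does.
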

The double covering group $\widehat{G}$ of $G$ is not an algebraic group, but behaves in many way like an algebraic group. In particular, we have the Cartan decomposition for $\widehat{G}$, i.e. $\widehat{G}=KA_{\widehat{G}}^+K$, where $K$ is the inverse image of a special maximal compact subgroup of $G$ and $A_{\widehat{G}}^+$ is the inverse image of $A^{+}_G$ in $\widehat{G}$. We define the corresponding Harish-Chandra spherical function by 
$\Xi^{\widehat{G}}=\Xi^{G}\circ p$, where $p$ is the covering map. 

Using Harish-Chandra's $\Xi$-function, we have the following definition of tempered representation for real reductive groups and metaplectic groups. 
\begin{definition} We say that a unitary representation $(\pi, \mathscr{H}_{\pi})$ of ${\bf H}$ is tempered if for any $e, e'\in \pi$, we have an inequality 
	\[\vert(\pi(g)e, e')\vert\ll\Xi^{\bf H}(g), \forall g\in {\bf H}(\mathbb{R}) .\] 
\end{definition}
Thanks to the work of Cowling, Haagerup and Howe \cite{CHH}, a representation of ${\bf H}$ is tempered if and only if its matrix coefficients are almost square integrable functions (i.e. it belongs to $L^{2+\epsilon}({\bf H}(\mathbb{R}))$ for all $\epsilon\in\mathbb{R}_{>0}$).

Let $\pi$ be a tempered representation of ${\bf H}$. For any $v,v'\in\pi$ and $g\in {\bf H}(\mathbb{R})$, by definition of tempered representation, there exists a constant $A_1>0$, such that 
\[\vert(\pi(g)v,v')\vert\leq A_1\cdot \Xi^{\bf H}(g).\] 
Moreover, a more precise estimation is given by
Sun \cite{Sun09} :
there is a continuous seminorm $\nu_\pi$ on $\pi$ such that
\begin{equation}\label{matrixcoeff}\vert (\pi(g)v, v')\vert\leq \Xi^{\bf H}(g)\nu_\pi(v)\nu_{\pi}(v'), \forall v, v'\in \pi. \end{equation}
We deduce, from the lemma \ref{est} and the fact that the Harish-Chandra function $\Xi^{\bf H}$ is bi-$K$-invariant, that for any $g=k_1ak_2\in KA_{\bf H}^+K$, we have
$$\Xi^{\bf H}(g)=\Xi^{\bf H}(k_1ak_2)=\Xi^{\bf H}(a).$$
If $\mathbf{H}=G$, there exists two positive constants $A_2$ and $B_1$ such that $$\Xi^{\bf H}(a)\leq A_2\delta_{P, G}^{1/2}(a)\sigma(a)^{B_1}.$$ If $\mathbf{H}=\widehat{G}$, there exists two positive constants $A_3$ and $B_2$ such that \begin{equation}\label{EST2}
	\Xi^{\bf H}(a)=\Xi^{G}(p(a))\leq A_3\delta_{P, G}^{1/2}(p(a))\sigma(p(a))^{B_2}.
\end{equation}
Thus, for any $g=k_1ak_2\in KA_{\bf H}^+K$, there exists two positive constants $A$ and $B$ such that   
\begin{equation}\label{estmeta}\vert(\pi(g)v,v')\vert\leq A\delta_{P, G}^{1/2}(a)\sigma(a)^B.\end{equation}  Here we abuse $a$ for $p(a)$ when $a\in\widehat{G}$. \section{Theta Correspondence}
In \cite[theorem 6.1]{Li89}, Li shows that if the dual pair $(G_1,G_2)$ is in the stable range, then there is an explicit realization of the theta correspondence using the mixed model of Weil representation \cite[\S 4]{Li89}.  The explicit realization of theta correspondence for unitary case is studied in \cite{LZ98} and for more general classical groups, it is studied in \cite{GQT14} and used by Xue in \cite{Xue22}. The explicit theta correspondence for our dual pairs has been described in \cite{AB98} and \cite{LPTZ03}.  In this paragraph, we recall the explicit theta correspondence using the mixed model of the Weil representation and  study the matrix coefficients of the explicit theta lift. 
\subsection{Mixed model of Weil representations}
The mixed model of Weil representations for our dual pairs are defined as follows:
\begin{enumerate}
	\item[(A)]
	Let $(W,\langle,\rangle_W)$ be a $2n$-dimensional real symplectic vector space and let $(V,\langle,\rangle_V)$ be a real quadratic space of dimension $2n+1$ with discriminant 
	\[{\rm disc}(V)=(-1)^n{\rm det}(V)\equiv 1 \in \mathbb{R}^{\times}/\mathbb{R}^{\times 2}.\]
	The space $(W\otimes V, \langle-,-\rangle_W\otimes\langle-,-\rangle_V)$ is a real symplectic space. We have a natural homomorphism 
	\begin{equation}\label{thetacorr}
		\widehat{\rm Sp}(W)\times {\rm O}(V)\rightarrow \widehat{\rm Sp}(W\otimes V).
	\end{equation}
	We denote by $H(W\otimes V)=(W\otimes V)\ltimes\mathbb{R} $ the Heisenberg group associated to the symplectic space $W\otimes V$. Let $\omega_{\psi}$ be the Weil representation of $\widehat{{\rm Sp}}(W\otimes V)\ltimes H(W\otimes V)$ associated to $W\otimes V$. We denote by $\omega_{W, V,\psi}$ the representation of $\widehat{\rm Sp}(W)\times {\rm O}(V)$ by pulling back the Weil representation $\omega_{\psi}$ by the homomorphism (\ref{thetacorr}). 
	
	Let $r_{V}$ be the Witt index of $V$. Let $V_0$ be the anisotropic kernel of $V$, which is of dimension $2n+1-2r_{V}$. Let $P_{V}=M_{V} N_{V}$ be a minimal parabolic subgroup of ${\rm O}(V)$ stabilizing a full flag of $V^{\perp}$. Let $A_{V}\cong (\mathbb{R}^{\times})^{r_{V}}$ be the maximal split torus in $M_{V}$ and define $$A^+_{V}=\{(b_1,\cdots,b_{r_{V}})\vert 0<b_1\leq\cdots\leq b_{r_{V}}\leq 1   \}.$$ 
	
	We have two dual pairs $(\mathrm{Sp}(W),{\rm O}(V))$ and $(\mathrm{Sp}(W),{\rm O}(V_0))$.
	Let $\mathscr{S}_{0}$ be the Schr\"odinger model of the Weil representation $\omega_{W,V_0,\psi}$ of the dual pair $(\mathrm{Sp}(W),{\rm O}(V_0))$. Let $\mathscr{S}=\mathscr{S}(W^{r_{V}})\widehat{\otimes}\mathscr{S}_{0}$. Then the Weil representation $\omega_{W,V,\psi}$ for the dual pair $(\mathrm{Sp}(W),{\rm O}(V))$ can be realized on $\mathscr{S}$, called the mixed model of $\omega_{W,V,\psi}$. We view elements in $\mathscr{S}$ as Schwartz functions on $W^{r_{V}}$ valued in $\mathscr{S}_{0}$.

	Since $\mathrm{Sp}(W)$ is split, the maximal split torus $A_W\cong \mathbb{R}^n$ and we define \[A_W^+=\{(a_1,\cdots,a_{n})\vert 0<a_1\leq\cdots\leq a_{n}\leq 1  \}.\]
	For any $a\in A_W^+$, $b\in A_V^+$ and $\phi\in\mathscr{S}$, we have 
	\begin{equation}\label{realaction}\omega_{W,V,\psi}(a,b)\phi(z,w)=\det(a)^{\frac{2n+1}{2}}\phi(b^{-1}za,wb). \end{equation}
	
	\item[(B)]Let $(V,(, )^{\sharp})$ be a $n$-dimension Hermitian space over $\bbH$ and let $(W, \langle,\rangle^{\sharp})$ be a $m$-dimensional skew-hermitian space $W$ over $({\bbH}, \sharp)$ with $m=n$ or $n-1$. The space 
	$(W\otimes_{\mathbb{H}}V,\mathrm{Tr}_{\mathbb{H}/\mathbb{R}}(\langle,\rangle^{\sharp}\otimes (,)^\sharp))$ is a real symplectic space of dimension $4mn$. This defines an embedding of dual pair $(\mathrm{Sp}(W),{\rm O}(V))$:
	$$ \mathrm{Sp}(W)\times {\rm O}(V)\rightarrow \mathrm{Sp}_{4nm}(\mathbb{R}). $$ Let $\omega_{W,V,\psi}$ be the oscillator representation for the dual pair $(\mathrm{Sp}(W),{\rm O}(V))$, which is a representation of $\widehat{\mathrm{Sp}}_{4nm}(\mathbb{R})$. Let $r_W$ and $r_V$ be the Witt index of $W$ and $V$ respectively. Let $W_0$ and $V_0$ be the corresponding anisotropic kernel of $W$ and $V$. Then we have 
	$$\dim_{\bbH}(W_0)=m-r_{W}, \text{ and }\dim_{\bbH}(V_0)=n-r_{V}. $$ Let $P_{W}=M_{W} N_{W}$ be a minimal parabolic subgroup of $\mathrm{Sp}(W)$ stabilizing a full flag of $W_0^{\perp}$. Then $M_{W}\cong \mathrm{GL}_1(\mathbb{R})^{r_{W}}\times \mathrm{Sp}(W_0)$. Let $A_{W}\cong (\mathbb{R}^{\times})^{r_{W}}$ be the maximal split torus in $M_{W}$ and let $$A^+_{W}=\{(a_1,\cdots,a_{r_{W}})\vert 0<a_1\leq\cdots\leq a_{r_{W}}\leq 1   \}.$$ 
	We have three dual pairs 
	\[(\mathrm{Sp}(W),{\rm O}(V)), (\mathrm{Sp}(W_0),{\rm O}(V)) \text{ and } (\mathrm{Sp}(W_0),{\rm O}(V_0)).\]
	Let $\mathscr{S}_{00}$ be the Schr\"odinger model of the Weil representation $\omega_{W_0,V_0,\psi}$ of the dual pair $(\mathrm{Sp}(W_0),{\rm O}(V_0))$. Let $\mathscr{S}_0=\mathscr{S}(W_0^{r_{V}})\widehat{\otimes}\mathscr{S}_{00}$. Then the Weil representation $\omega_{W_0,V,\psi}$ for the dual pair $(\mathrm{Sp}(W_0),{\rm O}(V))$ can be realized on $\mathscr{S}_0$. Finally the Weil representation $\omega_{W,V,\psi}$ of the dual pair $(\mathrm{Sp}(W),O(V))$ can be realized on $\mathscr{S}=\mathscr{S}(V^{r_{W}})\widehat{\otimes}\mathscr{S}_{0}$, called the mixed model of the Weil representation $\omega_{W,V,\psi}$. We view elements in $\mathscr{S}$ as Schwartz functions on $V^{r_{W}}\times W_0^{r_{V}}$ valued in $\mathscr{S}_{00}$.
	Define $$A^+_{V}=\{(b_1,\cdots,b_{r_{V}})\vert 0<b_1\leq\cdots\leq b_{r_{V}}\leq 1   \}.$$ 
	For any $a\in A_W^+$, $b\in A_V^+$ and $\phi\in\mathscr{S}$, we have \begin{equation}\label{quaternionaction}
		\omega_{W,V,\psi}(a,b)\phi(z,w)=\det(a)^{n}\det(b)^{m-r_W}\phi(b^{-1}za,wb). 
	\end{equation}
	
\end{enumerate}

\subsection{Matrix coefficients of Weil representations}

In this paragraph, we give the estimation of matrix coefficients of Weil representations $\omega_{W, V, \psi}$ using the mixed model described in the previous paragraph. 
\begin{lemma}
	For $\phi,\phi'\in\mathscr{S}(\mathbb{R})$ and $t\in\mathbb{R}^{\times}$, there exists some constant $C$ such that \begin{equation}\label{Essch}
		\left\vert\int_{\mathbb{R}}\phi(tx)\phi'(x)dx\right\vert \leq C\cdot \Upsilon(t),
	\end{equation}where $C$ is a constant and $\Upsilon(t)=\begin{cases}
		1,& \text{ if }\vert t\vert\leq 1,\\ \vert t\vert^{-1},&\text{ if }\vert t\vert >1. \end{cases}$
\end{lemma}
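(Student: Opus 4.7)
The inequality splits naturally into two regimes and is proved by elementary manipulations using only the fact that Schwartz functions are both bounded and integrable on $\mathbb{R}$.

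The plan is to treat the cases $|t| \leq 1$ and $|t| > 1$ separately. For $|t| \leq 1$, I would simply bound $|\phi(tx)|$ uniformly by $\|\phi\|_{\infty}$ (a finite quantity since $\phi$ is Schwartz) and pull this constant outside the integral, so that
\[
\left| \int_{\mathbb{R}} \phi(tx)\phi'(x)\, dx \right| \leq \|\phi\|_{\infty} \int_{\mathbb{R}} |\phi'(x)|\, dx = \|\phi\|_{\infty}\, \|\phi'\|_{1},
\]
which is a constant (independent of $t$), matching $\Upsilon(t) = 1$ in this range.

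For $|t| > 1$, I would perform the change of variables $y = tx$ to transfer the dilation onto $\phi'$, so that the Jacobian produces the desired $|t|^{-1}$ factor:
\[
\int_{\mathbb{R}} \phi(tx)\phi'(x)\, dx = \frac{1}{|t|} \int_{\mathbb{R}} \phi(y) \phi'(y/t)\, dy.
\]
Now bounding $|\phi'(y/t)| \leq \|\phi'\|_{\infty}$ and using $\phi \in L^1(\mathbb{R})$ gives the estimate
\[
\left| \int_{\mathbb{R}} \phi(tx)\phi'(x)\, dx \right| \leq \frac{\|\phi\|_{1}\, \|\phi'\|_{\infty}}{|t|},
\]
which matches the $|t|^{-1}$ behavior of $\Upsilon$.

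Finally I would take $C = \max\bigl( \|\phi\|_{\infty} \|\phi'\|_{1},\ \|\phi\|_{1} \|\phi'\|_{\infty} \bigr)$, which is finite since Schwartz functions lie in $L^{1} \cap L^{\infty}$, and combine the two bounds into the single inequality $\bigl|\int \phi(tx)\phi'(x)\,dx\bigr| \leq C \cdot \Upsilon(t)$. There is no genuine obstacle here; the only point worth remarking is that neither bound is sharp uniformly in the pair $(\phi,\phi')$ when one wants to track the seminorms of $\phi,\phi'$ in $\mathscr{S}(\mathbb{R})$, but for the pointwise inequality stated (with $C$ allowed to depend on $\phi,\phi'$) the elementary argument above suffices.
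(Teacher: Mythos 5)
Your proof is correct, and it follows the same basic organization as the paper's: split according to $\vert t\vert\le 1$ versus $\vert t\vert>1$ and use the change of variables $y=tx$ so that the Jacobian supplies the factor $\vert t\vert^{-1}$ in the second regime. Where you differ is in how the remaining case $0<\vert t\vert\le 1$ is estimated: the paper handles it by cutting the $x$-integral into the three regions $\vert x\vert\le 1$, $1<\vert x\vert\le 1/\vert t\vert$ and $\vert x\vert\ge 1/\vert t\vert$ and using the rapid decay of Schwartz functions on each piece, whereas you bypass any region decomposition with the one-line $L^\infty$--$L^1$ bound $\vert\int\phi(tx)\phi'(x)\,dx\vert\le\Vert\phi\Vert_\infty\Vert\phi'\Vert_1$ (and symmetrically $\Vert\phi\Vert_1\Vert\phi'\Vert_\infty/\vert t\vert$ for $\vert t\vert>1$). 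Your shortcut is entirely adequate for the statement as given, since $C$ may depend on $\phi,\phi'$, and it is arguably cleaner; the paper's more hands-on region-by-region estimate is the sort of argument that also tracks how the constant depends on finitely many Schwartz seminorms, which is what the subsequent refinement of the estimate (the seminorm version via Xue's Lemma 3.1) requires --- a caveat you correctly note at the end of your proposal.
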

\begin{proof} By changing of variable, one can reduce to show that for $\phi,\phi'\in\mathscr{S}(\mathbb{R})$, there exists a constant $C$ such that 
	\[\left\vert\int_{\mathbb{R}}\phi(tx)\phi'(x)dx\right\vert \leq C,\]
	for all $0<\vert t\vert \leq 1$. It follows from a direct estimation of the integration for three regions: $\vert x\vert\leq 1$, $1<\vert x\vert \leq 1/t$ and $\vert x\vert \geq 1/t$.
\end{proof}
Using this lemma, we get the following important estimation.
\begin{proposition}For our type $I$ dual pair $(G_1, G_2)$ over $(K, \sharp)$ of equal rank $n$ with underlying spaces $(W, V)$, there exist a constant $C$, such that 
	\begin{enumerate}
		\item[(A)] If $K=\bbR$, then for $(\hat{g},h)\in \widehat{\mathrm{Sp}}(W)\times \mathrm{O}(V)$ and $\phi,\phi'\in \omega_{W,V,\psi}$, we have \begin{equation}\label{estWeil}
			\vert (\omega_{W,V,\psi}(\hat{g},h)\phi,\phi')\vert\leq C\cdot\prod_{i=1}^{n} \vert a_i\vert^{\frac{2n+1}{2}}\prod_{k=1}^{n}\prod_{j=1}^{r_V}\Upsilon(a_kb_j^{-1}).
		\end{equation} 
		\item[(B)]If $K=\bbH$, then for $(g,h)\in \mathrm{Sp}(W)\times O(V)$ and $\phi,\phi'\in \omega_{W,V,n}$, we have \begin{equation}\label{estWeil2}
			\vert (\omega_{W,V,\psi}(g,h)\phi,\phi')\vert\leq C\cdot\prod_{i=1}^{r_W} \vert a_i\vert^{n}\prod_{j=1}^{r_V}\vert b_j\vert^{m-r_W}\prod_{i=1}^{r_W}\prod_{j=1}^{r_V}\Upsilon(a_ib_j^{-1}).
		\end{equation} 
	\end{enumerate}
\end{proposition}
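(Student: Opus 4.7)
The plan is to use the Cartan decomposition to reduce to the split tori $A_W^+ \times A_V^+$, and then apply the one-variable estimate (\ref{Essch}) coordinate by coordinate in the mixed model.

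By the Cartan decompositions $\widehat{\mathrm{Sp}}(W) = K_W A_W^+ K_W$ and $\mathrm{O}(V) = K_V A_V^+ K_V$ (respectively $\mathrm{Sp}(W) = K_W A_W^+ K_W$ in case (B)) together with unitarity of the Weil representation, I would first reduce to proving the bound when $(\hat g, h) = (a, b) \in A_W^+ \times A_V^+$: the $K_W \times K_V$-translates of $\phi, \phi'$ have every continuous Schwartz seminorm uniformly bounded over the compact $K_W \times K_V$-orbit, so the constant $C$ absorbs the $K$-action. Next, by density of elementary tensors in $\mathscr{S} = \mathscr{S}(W^{r_V}) \widehat{\otimes} \mathscr{S}_0$ (resp.\ $\mathscr{S} = \mathscr{S}(V^{r_W}) \widehat{\otimes} \mathscr{S}_0$ in case (B)) and continuity, I may assume $\phi$ and $\phi'$ are elementary tensors. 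The torus action formulas (\ref{realaction}) and (\ref{quaternionaction}) factor through the tensor decomposition, so the matrix coefficient splits as a product of a nontrivial integral over the mixed Schwartz factor and an auxiliary integral over the domain of $\mathscr{S}_0$ whose $(a,b)$-dependence is uniformly bounded (the relevant torus passes to a bounded action on the anisotropic piece).

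For the mixed factor in case (A), choose a symplectic basis so that $A_W$ acts on $W = L \oplus L^*$ as $\mathrm{diag}(a_1,\ldots,a_n, a_1^{-1},\ldots,a_n^{-1})$. In the adapted coordinates on $W^{r_V}$, each real coordinate is rescaled by a monomial $b_j^{-1} a_i^{\pm 1}$. By Fubini, the integral factors into one-dimensional integrals, and (\ref{Essch}) bounds each by $\Upsilon(b_j^{-1} a_i^{\pm 1})$. Using $|a_i|, |b_j|\leq 1$, one has $\Upsilon(a_i^{-1}b_j^{-1}) = |a_ib_j| \leq 1$, so the dual-Lagrangian factors are absorbed trivially, leaving $\prod_{i,j}\Upsilon(a_ib_j^{-1})$ multiplied by the prefactor $\det(a)^{(2n+1)/2} = \prod |a_i|^{(2n+1)/2}$ (interpreted through the Levi $\mathrm{GL}_n\hookrightarrow \mathrm{Sp}_{2n}$). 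This yields (\ref{estWeil}). Case (B) is entirely analogous: the mixed factor is $V^{r_W}$, the $\bbH$-Witt polarization of $V$ produces $r_W \cdot r_V$ coordinate pairs rescaled by $a_i^{\pm 1} b_j^{\mp 1}$, the anisotropic kernel of $V$ contributes a bounded factor absorbed into $C$, and the prefactor $\det(a)^n \det(b)^{m-r_W}$ from (\ref{quaternionaction}) plays the corresponding role in yielding (\ref{estWeil2}).

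The main obstacle is the bookkeeping in the last step: one must carefully track how the Lagrangian, dual-Lagrangian, and anisotropic pieces of $W$ (resp.\ $V$) each contribute either a $\Upsilon$-factor, a trivially absorbed $\leq 1$ factor, or a determinantal prefactor, and one must ensure that Jacobians arising from changes of variable on $\mathscr{S}_0$ are controlled by Schwartz seminorms of $\phi_2, \phi'_2$ rather than producing spurious $(a,b)$-dependent factors that could spoil the bound. The reduction to the torus and the coordinate-wise application of (\ref{Essch}) are otherwise routine.
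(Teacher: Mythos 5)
Your proposal is correct and follows essentially the same route as the paper's own proof: Cartan decomposition to reduce to $(a,b)\in A_W^+\times A_V^+$, the torus action formulas (\ref{realaction}) and (\ref{quaternionaction}) in the mixed model, and then the one-variable estimate (\ref{Essch}) applied coordinate-wise, with the dual-Lagrangian and anisotropic contributions bounded trivially. You merely make explicit some bookkeeping (elementary tensors, the $\Upsilon(a_i^{-1}b_j^{-1})\leq 1$ factors, uniformity over the compact $K$-orbits) that the paper leaves implicit and that is handled more systematically by the seminorm refinement in the next proposition via \cite[Lemma 3.1]{Xue22}.
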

\begin{proof}The two cases can be proved by the same argument and we only show the case $(A)$. For any $(\widehat{g},h)\in \widehat{{\rm Sp}}(W)\times {\rm O}(V)$, by Cartan decompostion, we can write 
	$(\widehat{g}, h)=(k_1ak_2, k_1^{'}bk_2^{'})$, with $k_i$ in the inverse image $K_W$ of a special maximal compact subgroup of ${\rm Sp}(W)$, $k_i^{'}$ in a special maximal compact subgroup $K_V$ of ${\rm O}(V)$, $a\in A_W^+$ and $b\in A_V^+$. Thus for $\phi,\phi'\in\omega_{W,V,\psi}$ there exists some constant $C_1$ such that
	$$ \vert(\omega_{W,V,\psi}(\widehat{g},h)\phi,\phi')\vert\leq C_1\cdot\det(a)^{\frac{2n+1}{2}}(\phi(b^{-1}\cdot a),\phi'). $$
	Together with the previous lemma, we get the disired estimation. 
\end{proof}

In the unitary case, the above estimation is refined by by Xue \cite{Xue22}. Similarly, we can provide a more precise estimation for our dual pairs.   

\begin{proposition} 
	For our type $I$ dual pair $(G_1, G_2)$ over $(K, \sharp)$ of equal rank $n$ with underlying spaces $(W, V)$,  there exists a continuous semi-norm $\nu_{\mathscr{S}}$ on $\omega_{W, V, \psi}$ such that  
	\begin{enumerate}
		\item[(A)] If $K=\bbR$, then we have
		\begin{equation}\label{mcXue}
			\vert(\omega_{W, V,\psi}(\widehat{g}, h)\phi, \phi') \vert\leq \prod_{i=1}^{n}\vert a_i\vert^{\frac{2n+1}{2}}\prod_{k=1}^{n}\prod_{j=1}^{r_V}\Upsilon(a_kb_j^{-1})\nu_{\mathscr{S}}(\phi)\nu_{\mathscr{S}}(\phi').
		\end{equation}
		\item[(B)]If $K=\bbH$, then we have
		\begin{equation}\label{mcXue2}
			\vert (\omega_{W,V,\psi}(g,h)\phi,\phi')\vert\leq \prod_{i=1}^{r_W} \vert a_i\vert^{n}\prod_{j=1}^{r_V}\vert b_j\vert^{m-r_W}\prod_{i=1}^{r_W}\prod_{j=1}^{r_V}\Upsilon(a_ib_j^{-1})\nu_{\mathscr{S}}(\phi)\nu_{\mathscr{S}}(\phi').
		\end{equation}
	\end{enumerate}
\end{proposition}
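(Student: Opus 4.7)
The plan is to refine the proof of the preceding proposition by tracking precisely how the implicit constants depend on $\phi$ and $\phi'$, replacing them by values of a fixed continuous seminorm on the Schwartz space. The first step is to strengthen the lemma (\ref{Essch}) to the following statement: for $\phi, \phi' \in \mathscr{S}(\mathbb{R})$ and $t \in \mathbb{R}^{\times}$, there exist continuous seminorms $\nu_1, \nu_2$ on $\mathscr{S}(\mathbb{R})$ such that
\[\left|\int_{\mathbb{R}} \phi(tx)\, \phi'(x)\, dx\right| \leq \nu_1(\phi)\, \nu_2(\phi')\, \Upsilon(t).\]
This follows by a direct case analysis: when $|t| \leq 1$, one estimates the integral by $\|\phi\|_\infty \cdot \|\phi'\|_{L^1}$; when $|t| > 1$, one changes variables to $y = tx$ and estimates by $|t|^{-1}\|\phi\|_{L^1} \cdot \|\phi'\|_\infty$. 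Since $\|\cdot\|_\infty$ and $\|\cdot\|_{L^1}$ are continuous seminorms on $\mathscr{S}(\mathbb{R})$, the claim follows.

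Next I would apply the Cartan decompositions $\widehat{g} = k_1 a k_2$ and $h = k'_1 b k'_2$ to reduce to the case $(\widehat g, h) = (a, b)$ with $a \in A_W^+$ and $b \in A_V^+$. Since the compact group $K_W \times K_V$ acts continuously on the Fr\'echet space $\omega_{W, V, \psi}$, for any continuous seminorm $\mu$ on $\omega_{W, V, \psi}$ the function $(k, k') \mapsto \mu(\omega_{W,V,\psi}(k, k')\phi)$ is bounded on $K_W \times K_V$ by another continuous seminorm of $\phi$. This reduces matters to estimating $|(\omega_{W, V, \psi}(a, b)\phi, \phi')|$ uniformly in $\phi, \phi'$.

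With $(\widehat g, h) = (a, b)$, plugging in the explicit action formula (\ref{realaction}) in case (A), respectively (\ref{quaternionaction}) in case (B), the matrix coefficient becomes an iterated integral over the coordinates of $W^{r_V}$, respectively $V^{r_W}\times W_0^{r_V}$. Applying Fubini and integrating out the residual $\mathscr{S}_0$ (resp.\ $\mathscr{S}_{00}$) factor first --- which produces an inner product bounded by a continuous seminorm on that factor --- leaves a product of one-dimensional integrals of the form treated in the first step, with $t = a_k b_j^{-1}$. Each such integral contributes a factor $\Upsilon(a_k b_j^{-1})$ weighted by continuous seminorms of $\phi$ and $\phi'$. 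The prefactors $\det(a)^{(2n+1)/2}$, respectively $\det(a)^n \det(b)^{m - r_W}$, come directly from the action formula.

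The main obstacle is the bookkeeping: each coordinate contributes its own pair of seminorms, and the product of these seminorms over all coordinates must be dominated by a single continuous seminorm $\nu_\mathscr{S}$ applied separately to $\phi$ and $\phi'$. This uses the nuclearity of Schwartz spaces together with the tensor product structure $\mathscr{S} = \mathscr{S}(W^{r_V}) \widehat{\otimes} \mathscr{S}_0$ in case (A), respectively $\mathscr{S} = \mathscr{S}(V^{r_W}) \widehat{\otimes} \mathscr{S}(W_0^{r_V}) \widehat{\otimes} \mathscr{S}_{00}$ in case (B), so that a finite product of the defining Schwartz seminorms is dominated by a single seminorm of sufficiently high order. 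Combining this with the Cartan reduction step then yields a single continuous seminorm $\nu_\mathscr{S}$ on $\omega_{W, V, \psi}$ satisfying the asserted estimates (\ref{mcXue}) and (\ref{mcXue2}).
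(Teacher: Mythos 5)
Your overall skeleton is the same as the paper's: reduce to $(a,b)\in A_W^+\times A_V^+$ via the Cartan decomposition (absorbing the compact directions by taking a supremum over the maximal compact groups, legitimate by uniform boundedness exactly as in the proof of Lemma \ref{cont}), insert the explicit mixed-model action (\ref{realaction}), resp.\ (\ref{quaternionaction}), and control what remains by the elementary scaling estimate. The one genuine difference is that the paper disposes of the key analytic input by citing \cite[Lemma 3.1]{Xue22} (reproduced as (\ref{Xueint})), namely the multidimensional, $\mathscr{S}_0$-valued version of the scaling estimate with seminorm dependence, whereas you propose to derive it from the one-variable case. Your strengthening of (\ref{Essch}) to the form $\nu_1(\phi)\nu_2(\phi')\,\Upsilon(t)$ is correct, and the Cartan/compactness reduction is fine.

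The gap lies in the sentence asserting that, after Fubini and after bounding the $\mathscr{S}_0$-pairing, one is left with ``a product of one-dimensional integrals of the form treated in the first step.'' This is only true when $\phi$ and $\phi'$ are pure tensors; a general element of $\mathscr{S}(\mathbb{R}^m)\widehat{\otimes}\mathscr{S}_0$ does not factor, so the integral $\int_{\mathbb{R}^m}\langle\phi(\lambda_1x_1,\dots,\lambda_mx_m),\phi'(x_1,\dots,x_m)\rangle\,dx$ does not split into one-variable integrals and the lemma of your first step does not literally apply. Nor does it suffice to prove the bound for pure tensors and invoke density: the desired inequality is not linear in $\phi$, so it does not pass from the set of pure tensors to its closed span without a further argument (this is where nuclearity would genuinely be needed, via the description of elements of the completed projective tensor product as absolutely convergent sums of pure tensors, not merely to ``combine seminorms''). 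The cleaner repair --- and it is essentially the proof of the cited lemma of Xue --- is to factor the \emph{bound} rather than the functions: since $\phi$ is a Schwartz function on $\mathbb{R}^m$ valued in the Fr\'echet space $\mathscr{S}_0$, there is a continuous seminorm $\nu$ on $\mathscr{S}(\mathbb{R}^m)\widehat{\otimes}\mathscr{S}_0$ with $\Vert\phi(u)\Vert_{L^2}\leq\nu(\phi)\prod_{i=1}^m(1+\vert u_i\vert)^{-2}$, and similarly for $\phi'$; after Cauchy--Schwarz in the $\mathscr{S}_0$-variable the integral is then dominated by $\nu(\phi)\nu(\phi')\prod_i\int_{\mathbb{R}}(1+\vert \lambda_i x_i\vert)^{-2}(1+\vert x_i\vert)^{-2}dx_i$, and the elementary one-variable estimate $\int_{\mathbb{R}}(1+\vert t x\vert)^{-2}(1+\vert x\vert)^{-2}dx\ll\Upsilon(t)$ yields the product of $\Upsilon$-factors with a single seminorm in $\phi$ and in $\phi'$. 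With that step made precise your argument becomes a correct, self-contained proof of (\ref{Xueint}) and hence of (\ref{mcXue}) and (\ref{mcXue2}); as written, the central multidimensional estimate is asserted rather than proved.
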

\begin{proof}
	Note that in \cite[Lemma 3.1]{Xue22}, Xue proved a general result: Let $m$ be an integer and take $\phi,\phi'\in\mathscr{S}(\mathbb{R}^m)\widehat{\otimes}\mathscr{S}_{00}$, viewed as Schwartz functions valued in $\mathscr{S}_{00}$, and $\lambda=(\lambda_1, \cdots, \lambda_m)\in {\mathbb{R}}^m$. Then there is a seminorm $\nu$ on $\mathscr{S}({\mathbb{R}}^m)\widehat{\otimes}\mathscr{S}_{00}$ such that \begin{equation}\label{Xueint}
		\left\vert \int_{\mathbb{R}^m} \langle\phi(\lambda_1 x_1,\cdots,\lambda_m x_m),\phi'(x_1,\cdots,x_m)\rangle dx_1\cdots dx_m\right\vert\leq \prod_{i=1}^m\Upsilon(\lambda_i^{-1})\nu(\phi)\nu(\phi'). 
	\end{equation}   Together with the formulae (\ref{realaction}) and (\ref{quaternionaction}), we can deduce our result.
	
\end{proof}

\subsection{Weil representation and theta lifts}

Let $\omega_{\psi}$ be the Weil representation of dual pair $(G_1,G_2)$ of type I over $(K,\sharp)$. If $K=\mathbb{R}$ ($resp. \mathbb{H}$), let $\pi$ be an irreducible genuine representation of the double cover $\widehat{G}_1$ of $G_1$ (resp. an irreducible representation of $G_1$). Then the tensor product $\omega_{\psi}\otimes\pi$ is a $\widehat{G}_1\times\widehat{G}_2$-module, where $\widehat{G}_2$ is the double covering group corresponding to $G_2$ and acting by $\omega_{\psi}$ and $\widehat{G}_1$ acts by $\omega_{\psi}\otimes\pi$. The maximal isotropic quotient of $\omega_{\psi}$ with respect to $\pi$ has the form $\pi\boxtimes \Theta_{\psi}(\pi)$ for some smooth representation $\Theta_{\psi}(\pi)$ of $G_2$, which is either $0$ or of finite length. Let $\theta_{\psi}(\pi)$ be the maximal semi-simple quotient of $\Theta_{\psi}(\pi)$. It is known by Howe \cite{Howe89} that $\theta_{\psi}(\pi)$ is either zero or irreducible.

If $K=\mathbb{H}$, we regard $\mathrm{Sp}(W)$ as a subgroup of $\mathrm{Sp}_{2m}(\mathbb{C})$. We may denote 
\begin{equation}
	\dim W=2n
	\begin{cases}
		2n & \textit{ if } K=\mathbb{R}\\
		2m & \textit{ if } K=\mathbb{H}
	\end{cases}  
	\textit{ and }\dim V=
	\begin{cases}
		2n+1 & \textit{ if } K=\mathbb{R}\\
		2n & \textit{ if } K=\mathbb{H}
	\end{cases}   
\end{equation}

\begin{lemma}\label{cont} Let  $(G_1, G_2)$ be one of our type $I$ dual pair over $(K, \sharp)$ of equal rank $n$ with underlying spaces $(W, V)$. Let $\pi$ be an irreducible genuine tempered representation of $\widehat{\rm Sp}(W)$ (if $K=\bbR$) or an irreducible tempered representation of $\mathrm{Sp}(W)$ (if $K=\bbH$).
	For any $v, v'\in \pi$ and $\phi,\phi'\in \omega_{W, V, \psi}$,  there exists continuous semi-norms $\nu_{\pi}$ on $\pi$ and $\nu_{\mathscr{S}}$ on $\omega_{W, V, \psi}$ such that
	\[\left\vert\int_{\mathrm{Sp}(W)}(\omega_{W,V,\psi}(g,1)\phi,\phi')\overline{(\pi(g)v,v')}dg\right\vert\leq \nu_{\pi}(v)\nu_{\pi}(v')\nu_{\mathscr{S}}(\phi)\nu_{\mathscr{S}}(\phi').\]
\end{lemma}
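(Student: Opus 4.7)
The plan is to apply the Cartan decomposition and the integration formula (\ref{deco}) to reduce the problem to an integral over $A_W^+$, bound the two matrix coefficients using the estimates (\ref{matrixcoeff})/(\ref{estmeta}) and (\ref{mcXue})/(\ref{mcXue2}), and then verify that the resulting elementary integral converges by an explicit exponent check.

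Concretely, (\ref{deco}) rewrites the integral as an iterated integral over $A_W^+\times K\times K$ weighted by a function $\nu(a)\leq C\,\delta_{P,G}^{-1}(a)$. On the tempered side, the bound (\ref{matrixcoeff}) combined with (\ref{estmeta}) and the bi-$K$-invariance of $\Xi^{\mathbf{H}}$ yields
\[|(\pi(k_1ak_2)v,v')|\leq A\,\delta_{P,G}^{1/2}(a)\,\sigma(a)^{B}\,\nu_{\pi}(v)\,\nu_{\pi}(v').\]
On the Weil side, I apply (\ref{mcXue}) (case A) or (\ref{mcXue2}) (case B) to the element $(k_1ak_2,1)$: the trivial Cartan decomposition of $\mathrm{Id}\in \mathrm{O}(V)$ has $b=\mathrm{Id}$, so every factor $\Upsilon(a_i b_j^{-1})=\Upsilon(a_i)$ equals $1$ because $0<a_i\leq 1$. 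The bound therefore collapses to
\[|(\omega_{W,V,\psi}(k_1ak_2,1)\phi,\phi')|\leq \Bigl(\prod_i a_i^{\alpha}\Bigr)\nu_{\mathscr{S}}(\phi)\,\nu_{\mathscr{S}}(\phi'),\]
with $\alpha=(2n+1)/2$ (product over $i=1,\ldots,n$) in case A, and $\alpha=n$ (product over $i=1,\ldots,r_W$) in case B.

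Combining these estimates and integrating out $K\times K$ to a constant, it suffices to show that
\[\int_{A_W^+}\Bigl(\prod_i a_i^{\alpha}\Bigr)\delta_{P,G}^{-1/2}(a)\,\sigma(a)^{B}\,da<\infty.\]
For case A with $G=\mathrm{Sp}_{2n}(\mathbb{R})$, a standard root-count gives $\delta_{P,G}^{1/2}(a)=\prod_{i=1}^n a_i^{n-i+1}$; combined with the Haar measure $\prod da_i/a_i$ on $A_W^+$, the integrand becomes $\prod a_i^{i-3/2}\sigma(a)^B$. The only negative exponent is $-1/2$ at $i=1$, which is integrable against $(1-\log a_1)^B\,da_1$ via the substitution $u=-\log a_1$; all other coordinates give non-negative exponents on a compact interval. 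Case B is handled analogously, computing $\delta_{P,G}^{1/2}$ from the root data of $\mathrm{Sp}(W)$ as the appropriate real form of $\mathrm{O}(2m,\mathbb{C})$.

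The main obstacle I anticipate is this final exponent bookkeeping: the proof hinges on the fact that the Weil-representation exponent $\alpha$ strictly dominates every exponent of $\delta_{P,G}^{1/2}$, up to a small Haar-measure correction absorbed by the polylogarithmic factor $\sigma(a)^B$. This strict domination is precisely the \emph{equal-rank} phenomenon, and is the structural reason why the theta correspondence preserves temperedness.
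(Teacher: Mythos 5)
Your argument is essentially the paper's own proof of Lemma \ref{cont}: Cartan decomposition together with the integration formula (\ref{deco}), Sun's tempered bound (\ref{matrixcoeff})--(\ref{estmeta}) on the $\pi$-side, the seminorm bound (\ref{mcXue})/(\ref{mcXue2}) on the Weil side with all factors $\Upsilon(a_ib_j^{-1})=\Upsilon(a_i)=1$ since $h=1$ and $0<a_i\leq 1$, and then an elementary exponent check over $A_W^+$; the only cosmetic difference is that you absorb the $K\times K$ integration into a constant by using the uniformity in the $K$-components of the stated bounds, whereas the paper transfers the $K$-parts onto the vectors and produces the seminorms as $\nu_{\pi}(v)=\sup_{k\in K_1}\tilde{\nu}_{\pi}(\pi(k)v)$ via the uniform boundedness principle, and your exponent $i-3/2$ against $da_i$ is the paper's $i-1/2$ with the Jacobian folded in differently, both convergent. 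One caveat on your last paragraph: the claimed \emph{strict} domination of the Weil exponent over every exponent of $\delta_{P,\mathrm{Sp}(W)}^{1/2}$ holds in case A (margin $1/2$ at $i=1$), but in case B the Weil exponent is $n$ while the top exponent of $\delta_{P,\mathrm{Sp}(W)}^{1/2}(a)=\prod_i\vert a_i\vert^{n+1-i}$ is also $n$, so at $i=1$ the net exponent is $0$ and convergence there is a borderline bookkeeping issue depending on the exact normalization of the measure and of the quaternionic coordinates (note also that the paper regards $\mathrm{Sp}(W)$ as sitting inside $\mathrm{Sp}_{2m}(\mathbb{C})$, not as a real form of $\mathrm{O}_{2m}(\mathbb{C})$); so ``handled analogously'' should be replaced by an explicit exponent check in case B rather than an appeal to strict domination.
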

\begin{proof}
	By estimations (\ref{matrixcoeff}) and (\ref{EST2}), there exists a continuous seminorm $\tilde{\nu}_{\pi}$ and positive constants $A,B$ such that for any $a\in A_W^+$ and $v,v'\in\pi$,
	$$\vert (\pi(a)v,v')\vert \leq A_1\delta^{\frac{1}{2}}_{P,\mathrm{Sp}(W)}(a)\sigma(a)^{B_1}\tilde{\nu}_{\pi}(v)\tilde{\nu}_{\pi}(v'). $$
	
	By (\ref{mcXue}) and (\ref{mcXue2}), there exists a continuous seminorm $\tilde{\nu}_{\mathscr{S}}$ such that for $a\in A_W^+$ and $\phi,\phi'\in\omega_{W,V,\psi}$, 
	$$\vert (\omega_{W,V,\psi}(a,1)\phi,\phi')\vert\leq \prod_{i=1}^{r_W} \vert a_i\vert^{\frac{\dim V}{2}}\tilde{\nu}_{\mathscr{S}}(\phi)\tilde{\nu}_{\mathscr{S}}(\phi'). $$
	
	Finally by the formula (\ref{deco}), the integral is bounded by
	\begin{equation}
		\begin{split}
			&\int_{ A_{W}^+}\delta_{P,\mathrm{Sp}(W)}(a)^{\frac{1}{2}}(1+\log\vert a_i\vert)^B\prod_{j=1}^{r_W}\vert a_j\vert^{\frac{\dim V}{2}}  da\\
			&\int_{K_1\times K_1}\tilde{\nu}_{\pi}(\pi(k_1)v)\tilde{\nu}_{\pi}(\pi(k_1^{'-1})v')\tilde{\nu}_{\mathscr{S}}(\omega_{W, V, \psi}(k_1,1)\phi)\tilde{\nu}_{\mathscr{S}}(\omega_{W, V, \psi}(k_1^{'-1},1)\phi')dk_1dk_1^{'},
		\end{split}
	\end{equation}
	where $B$ is a positive constant and $\tilde{\nu}_{\pi}$ (resp. $\tilde{\nu}_{\mathscr{S}}$) is a continuous semi-norm on $\pi$ (resp. $\omega_{W,V,\psi}$). 
	
	For any $a\in A_{W}^+$, $\delta_{P,\mathrm{Sp}(W)}(a)=\prod_{i=1}^{r_W}\vert a_i\vert^{2n+2-2i}$.
	The integral $$\int_{A_{W}^+}\prod\limits_{i=1}^{r_W}\vert a_i\vert^{-\frac{1}{2}(2n+2-2i)}(1-\sum\limits_{i=1}^{r_W}\log\vert a_i\vert)^B\prod_{j=1}^{r_W}\vert a_j\vert^{\frac{\dim V}{2}}da$$ converges. Since $K_1$ is compact, the integral
	\[\int_{K_1\times K_1}\tilde{\nu}_{\pi}(\pi(k_1)v)\tilde{\nu}_{\pi}(\pi(k_1^{'-1},1)v')\tilde{\nu}_{\mathscr{S}}(\omega_{W, V, \psi}(k_1,1)\phi)\tilde{\nu}_{\mathscr{S}}(\omega_{W, V, \psi}(k_1^{'-1},1)\phi')dk_1dk_1^{'}\] is bounded by 
	\[\mathrm{Vol}(K_1)^2\nu_{\pi}(v)\nu_{\pi}(v')\nu_{\mathscr{S}}(\phi)\nu_{\mathscr{S}}(\phi'),\] where $\nu_{\pi}(v)=\sup_{k_1\in K_1}\tilde{\nu}_{\pi}(\pi(k_1)v)$ and $\nu_{\mathscr{S}}(\phi)=\sup_{k_1\in K_1}\tilde{\nu}_{\mathscr{S}}(\omega_{W,V,\psi}(k_1,1)\phi)$. Each $\sup$ term defines a continuous semi-norm on the corresponding space by the uniform boundedness principle \cite[Theorem 33.1]{Tr67}. 
\end{proof}
\begin{proposition}Let $\pi$ be an irreducible genuine tempered representation of $\widehat{\rm Sp}(W)$ or an irreducible tempered representation of $\mathrm{Sp}(W)$. Take $v, v'\in \pi$ and $\phi, \phi'\in \omega_{W,V,\psi}$. The multilinear form\footnote{We ignore the identification of multilinear form and the linear form via the tensor product.}
	\begin{equation}\label{form}
		(v, v',\phi,\phi')\mapsto \int_{{\rm Sp}(W)}\overline{(\pi'(g)v, v')}\cdot (\omega_{W, V, \psi}(g,1)\phi, \phi') dg
	\end{equation}
	continuously extends to a linear form on $\bar{\pi}\widehat{\otimes}\pi\widehat{\otimes} \omega_{W, V, \psi}\widehat{\otimes}\bar{\omega}_{W, V, \psi}$. It is not identically zero if and only if $\theta_{W, V,\psi}(\pi)\neq 0$.
\end{proposition}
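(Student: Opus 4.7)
The first assertion is an immediate consequence of Lemma \ref{cont}. Since that lemma gives the estimate
$$\left|\int_{\mathrm{Sp}(W)}(\omega_{W,V,\psi}(g,1)\phi,\phi')\overline{(\pi(g)v,v')}\,dg\right| \leq \nu_{\pi}(v)\nu_{\pi}(v')\nu_{\mathscr{S}}(\phi)\nu_{\mathscr{S}}(\phi'),$$
the form is jointly continuous on $\bar\pi \times \pi \times \omega_{W,V,\psi} \times \bar\omega_{W,V,\psi}$, and the universal property of the completed projective tensor product of Fr\'echet spaces produces a unique continuous linear extension to $\bar\pi \hat\otimes \pi \hat\otimes \omega_{W,V,\psi} \hat\otimes \bar\omega_{W,V,\psi}$.

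For the non-vanishing statement, the first step is to observe that the integral is invariant under the simultaneous diagonal action of $G_1 = \mathrm{Sp}(W)$. Indeed, performing the change of variable $g \mapsto g_0^{-1} g g_0$ (valid by unimodularity of the reductive group $G_1$) in the defining integral yields
$$B(\pi(g_0)v, \pi(g_0)v', \omega_{W,V,\psi}(g_0,1)\phi, \omega_{W,V,\psi}(g_0,1)\phi') = B(v, v', \phi, \phi')$$
for every $g_0 \in G_1$, where $B$ denotes the multilinear form in (\ref{form}). Thus $B$ descends to a continuous $G_1$-invariant sesquilinear pairing on $\pi \hat\otimes \omega_{W,V,\psi}$. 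The dual of the space of such invariant pairings is controlled by $\mathrm{Hom}_{G_1}(\omega_{W,V,\psi}, \bar\pi)$, which (as a $G_2$-module, via Howe's universal property) is isomorphic up to duality to $\Theta_{\psi}(\bar\pi)$. When $\theta_\psi(\pi) = 0$, Howe's theorem forces $\Theta_\psi(\pi) = 0$, hence $\Theta_\psi(\bar\pi) = 0$ (tempered irreducibles are closed under conjugation), and therefore any $G_1$-invariant sesquilinear form on $\pi \hat\otimes \omega_{W,V,\psi}$ must vanish identically. In particular $B \equiv 0$, which is one direction of the equivalence.

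For the converse direction, assume $\theta_\psi(\pi) \neq 0$. Then there is a nonzero $(G_1 \times G_2)$-equivariant continuous surjection $q: \omega_{W,V,\psi} \twoheadrightarrow \pi \hat\otimes \Theta_\psi(\pi)$. The plan is to select $\phi, \phi'$ with $q(\phi), q(\phi')$ nonzero and to exhibit $v, v'$ for which $B(v, v', \phi, \phi')$ reduces (up to a nonzero Plancherel constant) to a product of inner products in $\pi$ and $\Theta_\psi(\pi)$ between the $\pi$-components of $q(\phi)$ and $q(\phi')$. This is a Rallis-type inner-product identity realizing the natural pairing on the theta quotient; formally it follows from a Schur-orthogonality calculation on the $G_1$-spectral decomposition of the matrix coefficient $(\omega_{W,V,\psi}(g,1)\phi,\phi')$, which is then integrated against the matrix coefficient of $\pi$.

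The main obstacle lies precisely in this last step: rigorously identifying $B$ with the pairing on the theta quotient. Matrix coefficients of the tempered representation $\pi$ are in general only $L^{2+\epsilon}$ rather than $L^2$, so one cannot apply classical Schur orthogonality; the convergence must be handled by Lemma \ref{cont}, and the spectral projection onto the $\pi$-isotypic part of $\omega_{W,V,\psi}|_{G_1}$ must be controlled using Harish-Chandra's Plancherel formula together with the mixed-model estimates (\ref{mcXue}), (\ref{mcXue2}). This is directly analogous to the arguments of Gan-Ichino over $p$-adic fields and of Xue in the unitary real case, adapted to the present dual pairs.
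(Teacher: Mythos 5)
Your first assertion is handled exactly as in the paper: the bound of Lemma \ref{cont} gives joint continuity of the multilinear form, and the extension to $\bar{\pi}\widehat{\otimes}\pi\widehat{\otimes}\omega_{W,V,\psi}\widehat{\otimes}\bar{\omega}_{W,V,\psi}$ follows from the universal property of the completed tensor product. Likewise, your argument for one implication is essentially sound: the change of variables shows the form in (\ref{form}) is invariant under the $\mathrm{Sp}(W)$-actions, so a nonzero form produces a nonzero element of $\mathrm{Hom}_{\mathrm{Sp}(W)}(\omega_{W,V,\psi}\otimes\bar{\pi},\mathbb{C})$, hence $\Theta_{W,V,\psi}(\pi)\neq 0$, and since $\Theta_{W,V,\psi}(\pi)$ has finite length its maximal semisimple quotient $\theta_{W,V,\psi}(\pi)$ is then nonzero.

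The genuine gap is the converse direction: $\theta_{W,V,\psi}(\pi)\neq 0$ implies the form is not identically zero. You describe a plan (choose $\phi,\phi'$ with nonzero image in the theta quotient and identify the integral with a product of inner products via a ``Schur-orthogonality calculation'' and spectral projection), but you then concede that this step cannot be carried out as stated, precisely because tempered matrix coefficients are only $L^{2+\epsilon}$ and no classical orthogonality relation applies; invoking Harish-Chandra's Plancherel formula ``to control the spectral projection onto the $\pi$-isotypic part'' is a statement of intent, not an argument, and for $\pi$ tempered but not square-integrable the $\pi$-isotypic component of $\omega_{W,V,\psi}|_{\mathrm{Sp}(W)}$ is not even an honest subrepresentation one can project onto. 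What is actually needed here is the Rallis inner product formula in the form established by Gan, Qiu and Takeda \cite{GQT14}, which relates the integral (\ref{form}) to the doubling zeta integral and the (regularized) Siegel--Weil formula and yields exactly the equivalence between nonvanishing of the form and nonvanishing of the theta lift; this is how the paper disposes of the nonvanishing statement, by citation rather than by a Plancherel-type computation. As written, your proposal proves the continuity statement and one direction of the equivalence, but the harder direction remains unproven.
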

\begin{proof}The absolute convergence and continuity follow from Lemma \ref{cont}. The nonvanishing is proved by Gan, Qiu and Takeda in \cite{GQT14}.
\end{proof}
The integral (\ref{form}) defines a hermitian form on $\bar{\pi}\otimes \omega_{W,V,\psi}$. In fact, for any $\phi,\phi'\in\omega_{W,V,\psi}$ and $v,v'\in\pi$,
\begin{equation}
	\begin{split}
		\overline{\langle v\otimes\phi,v'\otimes\phi'\rangle}=
		&\overline{\int_{\rm Sp(W)}\overline{(\pi(g)v, v')}\cdot (\omega_{W, V, \psi}(g,1)\phi, \phi') dg}\\
		=&	\int_{\rm Sp(W)}(\pi(g)v, v')\cdot\overline{ (\omega_{W, V, \psi}(g,1)\phi, \phi')} dg\\
		=&\int_{\rm Sp(W)}\overline{(v',\pi(g) v)}\cdot (\phi',\omega_{W, V, \psi}(g,1) \phi) dg\\
		=&\int_{\rm Sp(W)}\overline{(\pi(g^{-1}) v',v)}\cdot (\omega_{W, V, \psi}(g^{-1},1) \phi',\phi) dg\\
		=&\int_{\rm Sp(W)}\overline{(\pi(g) v',v)}\cdot (\omega_{W, V, \psi}(g,1) \phi',\phi) dg\\
		=&\langle v'\otimes\phi',v\otimes\phi\rangle
	\end{split}
\end{equation}
which means (\ref{form}) defines a hermitian form on $\Theta_{W, V, \psi}(\pi)$. By \cite[Section 2]{He},  
this form is semi-positivity.
Moreover, we have the fact that: if $q$ is a nonzero semi-positive definite hermitian form on a vector space $X$, and $L$ is the radical of $q$, then $q$ descends to an inner product on $X/L$, still denote by $q$. To prove this, if there exists an $x\notin L$ such that $q(x,x)=0$, then take some $y\in X$, which satisfies $q(x,y)\neq 0$. For $t\in\mathbb{C}$, then we have\[q(tx+y,tx+y)=q(y,y)+2\mathrm{Re}(t)\cdot q(x,y).\] As $t$ is an arbitraty complex number and $q(x,y)\neq 0$, we conclude that for a well-chosen complex number $t$, $q(tx+y,tx+y)$ can be a negative real number, which is a contradiction to the semi-positivity of $q$.

Let $R$ be the radical of semi-positive hermitian form defined by (\ref{form}) as above. Then the nonzero semi-positive definite hermitian form $q$ defines an inner product on $\Theta_{W, V, \psi}(\pi)/R$. Therefore $\Theta_{W, V, \psi}(\pi)/R$ must be semisimple, and thus coincides with $\theta_{W, V, \psi}(\pi)$.
\begin{remark}
	It is important to study when the equality $\Theta_{W, V, \psi}(\pi)=\theta_{W, V, \psi}(\pi)$ holds.
\end{remark}
The explicit theta correspondence allows us to give the explicit matrix coefficients of $\theta_{W,V,\psi}(\pi)$ as follows.
\begin{proposition}Let  $(G_1, G_2)$ be one of our type $I$ dual pair over $(K, \sharp)$ of equal rank $n$ with underlying spaces $(W, V)$. Let $\pi$ be an irreducible genuine tempered representation of $\widehat{\rm Sp}(W)$ ( if $K=\bbR$) or an irreducible tempered representation of $\mathrm{Sp}(W)$ (if $K=\bbH$).
	Then the function $$\Phi_{\phi,\phi',v,v'}:h\in O(V)\mapsto\int_{\rm Sp(W)}\overline{(\pi(g)v, v')}\cdot (\omega_{W, V, \psi}(g,h)\phi, \phi') dg$$ defines a matrix coefficient of $\theta_{W, V, \psi}(\pi)$.
\end{proposition}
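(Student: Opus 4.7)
The plan is to reinterpret $\Phi_{\phi,\phi',v,v'}(h)$ as an inner product in the quotient space $\Theta_{W,V,\psi}(\pi)/R = \theta_{W,V,\psi}(\pi)$ set up just before the statement, and then read it off as a matrix coefficient with respect to the $O(V)$-action on this quotient. Convergence of the defining integral for every $h\in O(V)$ is not an issue: since the actions of $\mathrm{Sp}(W)$ and $O(V)$ on the Weil representation commute, we may write $\omega_{W,V,\psi}(g,h)=\omega_{W,V,\psi}(g,1)\,\omega_{W,V,\psi}(1,h)$, so the integrand coincides with that of Lemma \ref{cont} after replacing $\phi$ by $\omega_{W,V,\psi}(1,h)\phi$. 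Hence Lemma \ref{cont} directly gives absolute convergence and the identification
\begin{equation*}
\Phi_{\phi,\phi',v,v'}(h)=\bigl\langle v\otimes\omega_{W,V,\psi}(1,h)\phi,\ v'\otimes\phi'\bigr\rangle,
\end{equation*}
where $\langle-,-\rangle$ is the semi-positive Hermitian form on $\bar\pi\widehat\otimes\omega_{W,V,\psi}$ constructed in the preceding proposition.

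The next step is to check that the $O(V)$-action on $\omega_{W,V,\psi}$ (trivial on the $\bar\pi$-factor) preserves this Hermitian form and hence descends to $\theta_{W,V,\psi}(\pi)=\Theta_{W,V,\psi}(\pi)/R$. Concretely, using that $\omega_{W,V,\psi}(1,h)$ is unitary and commutes with every $\omega_{W,V,\psi}(g,1)$, a short computation gives
\begin{equation*}
\bigl\langle v\otimes\omega_{W,V,\psi}(1,h)\phi,\ v'\otimes\omega_{W,V,\psi}(1,h)\phi'\bigr\rangle=\langle v\otimes\phi,v'\otimes\phi'\rangle.
\end{equation*}
Consequently $O(V)$ stabilizes the radical $R$, and the induced action on the quotient $\theta_{W,V,\psi}(\pi)$ is unitary with respect to the descended inner product.

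It then remains to observe that, writing $\overline{v\otimes\phi}$ for the image of $v\otimes\phi$ in $\theta_{W,V,\psi}(\pi)$, the quotient map is $O(V)$-equivariant, so $\overline{v\otimes\omega_{W,V,\psi}(1,h)\phi}=h\cdot\overline{v\otimes\phi}$. Combining this with the identification in the first paragraph yields
\begin{equation*}
\Phi_{\phi,\phi',v,v'}(h)=\bigl\langle h\cdot\overline{v\otimes\phi},\ \overline{v'\otimes\phi'}\bigr\rangle_{\theta_{W,V,\psi}(\pi)},
\end{equation*}
which is, by definition, a matrix coefficient of $\theta_{W,V,\psi}(\pi)$.

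The main technical point, rather than a conceptual obstacle, is the $O(V)$-invariance of the Hermitian form: verifying it rigorously requires exchanging the order of $\omega_{W,V,\psi}(g,1)$ and $\omega_{W,V,\psi}(1,h)$ under an absolutely convergent integral and invoking unitarity of $\omega_{W,V,\psi}(1,h)$ on the Schr\"odinger model. Once this is in place and one has noted that the construction $\overline{v\otimes\phi}$ indeed produces elements of $\theta_{W,V,\psi}(\pi)$ (since $\theta_{W,V,\psi}(\pi)\neq 0$ is equivalent to the nonvanishing of the form by the previous proposition), the statement follows without further work.
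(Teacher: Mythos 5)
Your proposal is correct and follows essentially the same route as the paper: the paper gives no separate proof, because the preceding construction of the semi-positive Hermitian form (\ref{form}), its descent past the radical $R$ to an inner product on $\Theta_{W,V,\psi}(\pi)/R=\theta_{W,V,\psi}(\pi)$, and the $O(V)$-equivariant surjection $c\colon\omega_{W,V,\psi}\widehat{\otimes}\overline{\pi}\to\theta_{W,V,\psi}(\pi)$ (used again in Proposition \ref{redense}) is exactly the argument, with $\Phi_{\phi,\phi',v,v'}(h)=\langle\theta_{W,V,\psi}(\pi)(h)c(\phi,v),c(\phi',v')\rangle$. Your added details (convergence for general $h$ via Lemma \ref{cont} applied to $\omega_{W,V,\psi}(1,h)\phi$, and the $O(V)$-invariance of the form so the action is unitary on the quotient) are the right ones and fill in what the paper leaves implicit.
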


We also need the following proposition to simplify the computation.

\begin{proposition}\label{redense}Let $\Phi$ be the subspace of the matrix coefficients of  $\theta_{W,V,\psi}(\pi)$ generated by $\Phi_{\phi,\phi',v,v'}$, where $\phi$ and $\phi'$ range over a dense subspace of $\omega_{W,V,\psi}$, and $v$ and $v'$ range over a dense subspace of $\pi$. Then the space $\Phi$ is dense in the space of matrix coefficients of $\theta_{W,V,\psi}(\pi)$.
\end{proposition}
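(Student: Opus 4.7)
The plan is to realize every element of $\Phi$ as a matrix coefficient of $\theta_{W,V,\psi}(\pi)$ associated with a specific pair of vectors in $\theta_{W,V,\psi}(\pi)$, and then to invoke continuity of the matrix coefficient map together with density of algebraic tensor products inside the projective tensor product.

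First, because the $\mathrm{Sp}(W)$ and $\mathrm{O}(V)$ actions on $\omega_{W,V,\psi}$ commute, we may write $\omega_{W,V,\psi}(g,h)=\omega_{W,V,\psi}(g,1)\,\omega_{W,V,\psi}(1,h)$, whence
\begin{align*}
\Phi_{\phi,\phi',v,v'}(h) &= \int_{\mathrm{Sp}(W)}\overline{(\pi(g)v,v')}\,\bigl(\omega_{W,V,\psi}(g,1)\omega_{W,V,\psi}(1,h)\phi,\phi'\bigr)\,dg \\
&= \bigl\langle v\otimes \omega_{W,V,\psi}(1,h)\phi,\ v'\otimes\phi'\bigr\rangle,
\end{align*}
where $\langle\cdot,\cdot\rangle$ denotes the Hermitian form (\ref{form}) on $\bar\pi\widehat{\otimes}\omega_{W,V,\psi}$. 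Let $q\colon \bar\pi\widehat{\otimes}\omega_{W,V,\psi}\to \theta_{W,V,\psi}(\pi)$ be the continuous linear map obtained by passing to $\Theta_{W,V,\psi}(\pi)/R=\theta_{W,V,\psi}(\pi)$; it intertwines $1\otimes\omega_{W,V,\psi}(1,h)$ with the $\mathrm{O}(V)$-action on $\theta_{W,V,\psi}(\pi)$, so
\[
\Phi_{\phi,\phi',v,v'}(h) = \bigl(\theta_{W,V,\psi}(\pi)(h)\,q(v\otimes\phi),\,q(v'\otimes\phi')\bigr)_{\theta_{W,V,\psi}(\pi)}.
\]
Thus elements of $\Phi$ are exactly matrix coefficients built from the vectors $q(v\otimes\phi)$ with $v\in D_\pi$ and $\phi\in D_\omega$, where $D_\pi\subset\pi$ and $D_\omega\subset\omega_{W,V,\psi}$ denote the prescribed dense subspaces.

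Next, I would show that $\{q(v\otimes\phi):v\in D_\pi,\phi\in D_\omega\}$ spans a dense subspace of $\theta_{W,V,\psi}(\pi)$. The algebraic tensor product $D_\pi\otimes_{\mathrm{alg}} D_\omega$ is dense in the completed projective tensor product $\bar\pi\widehat{\otimes}\omega_{W,V,\psi}$ by the very definition of $\widehat{\otimes}$, and continuity of $q$ yields density of $q(D_\pi\otimes_{\mathrm{alg}} D_\omega)$ inside $q(\bar\pi\widehat{\otimes}\omega_{W,V,\psi})$. The closure of $q(\bar\pi\widehat{\otimes}\omega_{W,V,\psi})$ is a closed $\mathrm{O}(V)$-stable subspace of the irreducible Casselman--Wallach representation $\theta_{W,V,\psi}(\pi)$; since $q\neq 0$ (otherwise $\theta_{W,V,\psi}(\pi)$ would be zero), this closure must be the whole space. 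Finally, the matrix coefficient map $(\xi,\xi')\mapsto \bigl[h\mapsto(\theta_{W,V,\psi}(\pi)(h)\xi,\xi')\bigr]$ is continuous from $\theta_{W,V,\psi}(\pi)\times\theta_{W,V,\psi}(\pi)$ into the space of matrix coefficients endowed with the topology of uniform convergence on compact subsets of $\mathrm{O}(V)$: for any compact $C\subset\mathrm{O}(V)$ the family $\{\theta_{W,V,\psi}(\pi)(h):h\in C\}$ is equicontinuous on the Fréchet space $\theta_{W,V,\psi}(\pi)$, so convergence in the two slots forces uniform convergence of the matrix coefficients on $C$. Composing this continuous bilinear map with the density statement above gives the desired density of $\Phi$.

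The main obstacle is the density of $q(\bar\pi\widehat{\otimes}\omega_{W,V,\psi})$ in $\theta_{W,V,\psi}(\pi)$: one must pin down precisely which quotient realizes $\theta_{W,V,\psi}(\pi)$ as a Casselman--Wallach representation, distinguish carefully between the algebraic quotient $\Theta_{W,V,\psi}(\pi)/R$ and its Fréchet completion, and invoke irreducibility to conclude that a nonzero continuous $\mathrm{O}(V)$-equivariant image is automatically dense.
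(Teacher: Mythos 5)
Your argument is correct and follows essentially the same route as the paper: identify $\Phi_{\phi,\phi',v,v'}$ with the matrix coefficient of $\theta_{W,V,\psi}(\pi)$ at the images of $v\otimes\phi$, $v'\otimes\phi'$ under the quotient map, then combine density of the algebraic tensor product of the dense subspaces in $\bar\pi\widehat{\otimes}\omega_{W,V,\psi}$ with continuity of that map and of the matrix coefficient pairing. The only cosmetic difference is that you invoke irreducibility to get density of the image, whereas the paper simply uses surjectivity of the fixed quotient map $c$; both are fine.
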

\begin{proof}
	Fix a surjective homomorphism $c:\omega_{W,V,\psi}\widehat{\otimes}\overline{\pi}\rightarrow\theta_{W,V,\psi}(\pi)$. The matrix coefficients of $\theta_{W,V,\psi}(\pi)$ are of the form $\langle \theta_{W,V,\psi}(\pi)(h)c(\phi,v),c(\phi',v')\rangle$ with $h\in O(V)$. Then the assertion follows from the surjectivity of $c$ and the density of $\omega_{W,V,\psi}\otimes\overline{\pi}$ in $\omega_{W,V,\psi}\widehat{\otimes}\overline{\pi}$.
\end{proof}

\section{Theta lifts for tempered representations}
In this paragraph, we use the estimations of the matrix coefficients of various representations established in the previous sections to prove our main theorem \ref{main}. To prove the theorem, it is suffit to show that the matrix coefficients of $\theta_{W,V,\psi}(\pi)$ are almost square integrable functions (i.e. $L^{2+\epsilon_0}(O(V))$ for all $\epsilon_0\in\mathbb{R}_{>0}$). By the proposition \ref{redense}, it suffices to prove that for any $\epsilon_0\in\mathbb{R}_{>0}$, for any $\phi,\phi'\in\omega_{W,V,\psi}$ and $v,v'\in \pi$, the integral 
\begin{equation*}
	\int_{O(V)}\left\vert \Phi_{\phi,\phi',v,v'}(h) \right\vert^{2+\epsilon_0}dh=\int_{O(V)}\left\vert\left(\int_{\mathrm{Sp}(W)}(\omega_{W,V,\psi}(g,h)\phi,\phi')\overline{(\pi(g)v,v')} dg\right)\right\vert^{2+\epsilon_0}dh
\end{equation*}
converges. In the following, we will prove a stronger condition:
the integral 
\begin{equation}\label{minusfinal}\int_{O(V)}\left(\int_{\mathrm{Sp}(W)}\left\vert(\omega_{W,V,\psi}(g,h)\phi,\phi')(\pi(g)v,v')\right\vert dg\right)^{2+\epsilon_0}dh\end{equation} converges.

Let $r_W$ and $r_V$ be the Witt index of $W$ and $V$ respectively. Note that, for our dual pair of type $I$ of equal rank $n$, if $K=\bbH$, the dimension of $W$ over $\bbH$ can be $n-1$ or $n$. In the following, if $K=\bbH$, we will assume $\dim W=\dim V=n$ and the other case works in the same way. 

\subsection{Reduction using the estimation of matrix coefficients}
Let ${\rm Sp}(W)=K_1A_W^+K_1$ and ${\rm O}(V)=K_2A_V^+K_2$ be the Cartan decomposition of ${\rm Sp}(W)$ and ${\rm O}(V)$ respectively. Let $g\in\mathrm{Sp}(W)$, $h\in \mathrm{O}(V)$, then there exists $a=(a_1, \cdots, a_{r_W})\in A_W^+$, $b=(b_1, \cdots, b_{r_V})\in A_V^+$, $k_1,k_1'\in K_1$ and $k_2,k_2'\in K_2$ such that $g=k_1ak_1'$ and $h=k_2bk_2'$.

For any $\phi,\phi'\in\omega_{W,V,\psi}$ and $v,v'\in \pi$, by the estimations (\ref{estmeta}) and the estimation of the matrix coefficient of Weil representation (see (\ref{estWeil}) and (\ref{estWeil2})), we deduce that there exists positive constants $A,B$ such that
\begin{equation}
	\begin{split}	
		&\left\vert(\omega_{W,V,\psi}(g,h)\phi,\phi')(\pi(g)v,v')\right\vert\\
		\leq & A\delta_{P, {\mathrm{Sp}(W)}}^{\frac{1}{2}}(a)\sigma(a)^B\prod_{i=1}^{r_W} \vert a_i\vert^{\frac{\dim V}{2}}\prod_{j=1}^{r_V}\vert b_j\vert^{\frac{\dim W}{2}-r_W}\prod_{i=1}^{r_W}\prod_{j=1}^{r_V}\Upsilon(a_ib_j^{-1}).
	\end{split}
\end{equation}
To simplify the notation, for $a\in A_W^+$ and $b\in A_V^+$, we set 
\[C_{W,V}(a,b)=\prod_{i=1}^{r_W} \vert a_i\vert^{\frac{\dim V}{2}}\prod_{j=1}^{r_V}\vert b_j\vert^{\frac{\dim W}{2}-r_W}\prod_{i=1}^{r_W}\prod_{j=1}^{r_V}\Upsilon(a_ib_j^{-1}).\]   
Together with the equation (\ref{deco}), we have
\begin{equation}
	\begin{split}
		&\int_{\mathrm{Sp}(W)}\left\vert(\omega_{W,V,\psi}(g,h)\phi,\phi')(\pi(g)v,v')\right\vert dg \\
		\leq& A\int_{\mathrm{Sp}(W)}\delta_{P,{\mathrm{Sp}(W)}}^{\frac{1}{2}}(a)\sigma(a)^B C_{W,V}(a,b)dg\\
		\leq& A\int_{A_{W}^+}\delta_{P,{\mathrm{Sp}(W)}}^{-1}(a)\int_{K_1\times K_1}\delta_{P,{\mathrm{Sp}(W)}}^{\frac{1}{2}}(a)\sigma(a)^B C_{W,V}(a,b)dk_1dadk'_1\\
		=&A\cdot{\rm Vol}(K_1)^2\cdot\int_{A_{W}^+}\delta_{P,{\mathrm{Sp}(W)}}^{-\frac{1}{2}}(a)\sigma(a)^B C_{W,V}(a,b)da.
	\end{split}
\end{equation}
Hence if we denote $A\cdot{\rm Vol}(K_1)^2$ by $A'$, then for any $\epsilon_0=2\epsilon>0$, using the equation (\ref{deco}) again, we have
\begin{equation}\label{Intbefore}
	\begin{split}
		&\int_{O(V)}\left(\int_{\mathrm{Sp}(W)}\left\vert(\omega_{W,V,\psi}(g,h)\phi,\phi')(\pi(g)v,v')\right\vert dg\right)^{2(1+\epsilon)}dh\\
		\leq& A' \int_{O(V)}\left(\int_{ A_{W}^+}\delta_{P,{\mathrm{Sp}(W)}}^{-\frac{1}{2}}(a)\sigma(a)^B C_{W,V}(a,b)da\right)^{2(1+\epsilon)}dh\\
		\leq & A'\int_{A_{V}^+}\delta_{P,{O(V)}}^{-1}(b)\int_{K_2\times K_2}\left(\int_{A_{W}^+}\delta_{P,{\mathrm{Sp}(W)}}^{-\frac{1}{2}}(a)\sigma(a)^B C_{W,V}(a,b)da\right)^{2(1+\epsilon)}dk_2dbdk'_2\\
		=& A'\cdot\mathrm{Vol}(K_2)^2\int_{A_{V}^+}\delta_{P,{O(V)}}^{-1}(b)\left(\int_{A_{W}^+}\delta_{P,{\mathrm{Sp}(W)}}^{-\frac{1}{2}}(a)\sigma(a)^B C_{W,V}(a,b)da\right)^{2(1+\epsilon)}db.
	\end{split}
\end{equation}

By the formula (\ref{height}), we have 
\[\sigma(a)\leq 1-\sum_{i=1}^{r_W}\log\vert a_i\vert\leq 1-\sum_{i=1}^{r_W}\log\vert a_i\vert-\sum_{j=1}^{r_V}\log\vert b_j\vert. \]	
Note that if $K=\bbH$, we realize the element of $\mathrm{GL}(1,\mathbb{H})$ as an element of $\mathrm{GL}(2,\mathbb{C})$.  
Thus the modular characters $\delta_{P,\mathrm{Sp}(W)}$ and $\delta_{P, {\rm O}(V)}$ are given by the following formula
\[\delta_{P,{\mathrm{Sp}(W)}}(a)= \prod_{i=1}^{r_W}\vert a_i\vert^{2n+2-2i};\] 
\[   \delta_{P,{O(V)}}(b)=\begin{cases}\prod_{j=1}^{r_V}\vert b_j\vert^{2n+1-2j}, & \text{ if } K=\bbR; \\ \prod_{j=1}^{r_V}\vert b_j\vert^{2n-2j}, & \text{ if } K=\bbH .\end{cases}\]

Thus, we have the integral	
\begin{equation}\label{intli}
	(\ref{Intbefore})=\begin{split}
		&\int_{ A_{W}^+\times A_{V}^+} \prod_{i=1}^{n} \vert a_i\vert^{(2i-1)(1+\epsilon)}\prod_{j=1}^{r_V}\vert b_j\vert ^{2j-2n-1}\prod_{k=1}^n\prod_{j=1}^{r_V}\Upsilon(a_kb_j^{-1})^{2+2\epsilon}\\ &(1-\sum_{i=1}^n\log\vert a_i\vert-\sum_{j=1}^{r_V}\log\vert b_j\vert)^{B(2+2\epsilon)}dadb.
	\end{split}
\end{equation} 
if $K=\mathbb{R}$ and 
\begin{equation}\label{intli2}
	(\ref{Intbefore})=\begin{split}
		&\int_{ A_{W}^+\times A_{V}^+} \prod_{i=1}^{r_W} \vert a_i\vert^{2(i-1)(1+\epsilon)}\prod_{j=1}^{r_V}\vert b_j\vert ^{(2j-2n)+2(1+\epsilon)(n-r_W)}\prod_{k=1}^{r_W}\prod_{j=1}^{r_V}\Upsilon(a_kb_j^{-1})^{2+2\epsilon}\\ &(1-\sum_{i=1}^{r_W}\log\vert a_i\vert-\sum_{j=1}^{r_V}\log\vert b_j\vert)^{B(2+2\epsilon)}dadb.
	\end{split}
\end{equation} if $K=\mathbb{H}$.

To prove the convergence of the integral (\ref{com}), it suffices to show the integrals (\ref{intli}) and (\ref{intli2}) are convergent.

\subsection{Proof of the convergence of the integral (\ref{intli})}
We prove the convergence of the integral (\ref{intli}), this method also works for the convergence of the integral (\ref{intli2}).  Let $(p_1,\cdots,p_{r_V+1})$ be a $(r_V+1)$-tuple of non-negative integers such that 
\[p_1+\cdots p_{r_V+1}=n.\] Let $S_{p_1,\cdots,p_{r_V+1}}$ be the subset of  $A_{W}^+\times A_{V}^+$, defined by the condition
\begin{equation}\label{order}
	\begin{split}
		&\vert a_1\vert\leq\cdots\leq \vert a_{p_1}\vert\leq\vert b_1\vert \\ \leq  &\vert a_{p_1+1}\vert\leq \cdots\leq\vert a_{p_1+p_2}\vert \leq\vert b_2\vert 
		\leq\vert a_{p_1+p_2+1}\vert\leq\cdots\leq\vert a_{p_1+\cdots+p_{r_V}}\vert \\\leq &\vert b_{r_V}\vert\leq \vert a_{p_1+\cdots+p_{r_V}+1}\vert \leq\cdots\leq\vert a_{p_1+\cdots+p_{r_V+1}}\vert
		\leq 1.
	\end{split}
\end{equation}
We can break the domain $A_{W}^+\times A_{V}^+$ of the integral (\ref{intli}) by $S_{p_1,\cdots,p_{r_V+1}}$, and it suffices to show that over each region $S_{p_1,\cdots,p_{r_V+1}}$, the integral (\ref{intli}) converges.
We will use the following simple lemma to conclude its convergence.	
\begin{lemma}\label{com}
	Let $N$ be a natural number. Let $s_1,\cdots,s_N$ and $B$ be real numbers. If $s_1+\cdots+s_i>0$ for all $1\leq i\leq N$, then the integral $$\int_{\vert x_1\vert\leq\cdots\leq\vert x_N\vert\leq 1}\vert x_1\vert^{s_1}\cdots\vert x_N\vert^{s_N}(1-\sum_{i=1}^N\log\vert x_i\vert)^{B}dx_1\cdots dx_N$$ converges.
\end{lemma}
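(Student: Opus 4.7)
The plan is to reduce the integral to a product of one-variable integrals by a telescoping change of variables that trivializes the ordering constraint on the $|x_i|$. By the symmetry of the integrand under $x_i \mapsto -x_i$, it suffices to treat the positive orthant $0 < x_1 \leq x_2 \leq \cdots \leq x_N \leq 1$, losing only an overall factor of $2^N$.

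Introduce new variables $u_1, \ldots, u_N \in (0,1]$ by $u_N = x_N$ and $u_i = x_i / x_{i+1}$ for $i < N$, so that $x_i = u_i u_{i+1} \cdots u_N$. This map carries the ordered region bijectively onto the unit cube $(0,1]^N$; the matrix $(\partial x_i/\partial u_j)$ is upper triangular with diagonal entries $\prod_{k > i} u_k$, so the Jacobian equals $\prod_{k=1}^{N} u_k^{k-1}$. Writing $S_j := s_1 + \cdots + s_j$, a direct computation gives
$$\prod_{i=1}^N x_i^{s_i} = \prod_{k=1}^{N} u_k^{S_k}, \qquad 1 - \sum_{i=1}^N \log x_i = 1 + \sum_{k=1}^{N} k(-\log u_k),$$
so the integral becomes
$$\int_{(0,1]^N} \prod_{k=1}^{N} u_k^{S_k + k - 1} \Bigl(1 + \sum_{k=1}^{N} k(-\log u_k)\Bigr)^{B} du_1 \cdots du_N.$$

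To separate the variables in the logarithmic factor, I will use the elementary inequality $1 + \sum_k a_k \leq \prod_k (1 + a_k)$, valid for $a_k \geq 0$. When $B \geq 0$ this bounds the factor by $\prod_k (1 - k \log u_k)^{B}$, and when $B < 0$ the factor is already at most $1$. In either case the integral is dominated by a product of one-variable integrals of the form $\int_0^1 u^{S_k + k - 1}(1 - k \log u)^{\max(B,0)}\, du$, each of which converges as soon as $S_k + k - 1 > -1$, i.e.\ $S_k > -k$. The hypothesis $S_k > 0$ delivers this with room to spare. The only real obstacle is bookkeeping the substitution cleanly; once the change of variables is in place, convergence reduces to the standard fact that $\int_0^1 u^{s-1}(-\log u)^{B}\, du$ is finite for every $s > 0$ and every real $B$.
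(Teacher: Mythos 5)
Your proof is correct, and in fact the paper offers no proof of this lemma at all --- it is stated as a ``simple lemma'' and used directly --- so your argument fills a gap rather than duplicating one. The telescoping substitution $u_N=x_N$, $u_i=x_i/x_{i+1}$ is the standard way to handle such ordered domains: the Jacobian $\prod_k u_k^{k-1}$, the identities $\prod_i x_i^{s_i}=\prod_k u_k^{S_k}$ and $-\sum_i\log x_i=\sum_k k(-\log u_k)$, and the separation of the logarithmic factor via $1+\sum_k a_k\le\prod_k(1+a_k)$ for $B\ge 0$ (resp.\ bounding it by $1$ for $B<0$) are all verified correctly, and the resulting one-variable integrals converge because $S_k+k-1>-1$. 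One nitpick: your closing ``standard fact'' that $\int_0^1 u^{s-1}(-\log u)^B\,du$ is finite for \emph{every} real $B$ is false for $B\le -1$ (the integral diverges at $u=1$, where $-\log u\sim 1-u$); but your actual argument never uses this, since the factors you integrate are $(1-k\log u)^{\max(B,0)}$ with base bounded below by $1$, and the case $B<0$ was already disposed of by bounding the whole factor by $1$. So the proof stands as written once that last sentence is deleted or restricted to $B>-1$.
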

Note that in a fixed region $S_{p_1,\cdots,p_{r_V+1}}$, we have \begin{equation*}
	\begin{split}
		\prod_{i=1}^n\prod_{j=1}^{r_V}\Upsilon(a_ib_j^{-1})
		=\prod_{j=1}^{r_V}\left(\left\vert \prod_{i=1}^{p_{j+1}}a_{i+\sum_{k=1}^jp_k}\right\vert^{-j}\cdot\left\vert b_j\right\vert^{n-(\sum_{k=1}^{j}p_k)}\right).
	\end{split}
\end{equation*}
We rearrange the terms in the integral (\ref{intli}) with respect to the order given by the condition (\ref{order}). To prove the integral (\ref{intli}) converges, it suffices to prove the integral (\ref{intli}) on region $S_{p_1,\cdots,p_{r_V+1}}$ satisfies the condition of lemma \ref{com} with respect to this order.

For $0\leq t\leq p_{j+1},1\leq j\leq r_V$, we check the sum of the exponents in the integral (\ref{intli}) up to $a_{p_1+\cdots+p_j+t}$ :
\begin{enumerate}
	\item The sum of the exponents of $a_i(1\leq i\leq p_1+\cdots+p_j+t)$:
	$$(1+3+\cdots+(2(p_1+\cdots+p_j+t)-1))(1+\epsilon)-(p_2+2p_3+\cdots+(j-1)p_j+jt)(2+2\epsilon)$$
	\item The sum of the exponents of $b_i(1\leq i\leq j)$:
	\begin{equation*}
		\begin{split}
			2(1+\cdots+j)-j(2n+1)+((n-p_1)+\cdots+(n-p_1-\cdots-p_j))(2+2\epsilon).
		\end{split}
	\end{equation*}
\end{enumerate} 
Summing these two terms, we get
$$(p_1+\cdots+p_j+t)^2+\epsilon((p_1+\cdots+p_j+t)^2+2j(n-p_1-\cdots-p_j-t))>0.$$ The same type of verification shows that the sum of the exponents up to $b_j$ is positive. Hence the integral (\ref{intli}) satisfies the condition of the lemma \ref{com}. As a consequence, the integral (\ref{minusfinal}) converges.

\section*{Acknowledgement}

This note is based on a disussion with Xue Hang. The authors would like to express their gratitude to Xue Hang for explaining his works on unitary groups to us. The second author would like to thank Wenwei Li and Fang Gao for communications on representations of metaplectic groups.

\end{document}